\documentclass[12pt]{article}

\usepackage[margin=2.5cm]{geometry}

\usepackage{amsmath,amsthm,amsfonts,amscd,amssymb,bbm,mathrsfs,enumerate,url}

\usepackage{graphicx,tikz}
\usepackage[pdfborder={0 0 0}]{hyperref}
\usetikzlibrary{matrix,arrows}
\usetikzlibrary{decorations.pathreplacing}
\usetikzlibrary{decorations.pathmorphing}
\usetikzlibrary{patterns,fadings}

\def\RR{{\mathbb R}}
\def\CC{{\mathbb C}}
\def\NN{{\mathbb N}}
\def\ZZ{{\mathbb Z}}

\def\a{\alpha}

\def\k{\kappa}

\def\l{\lambda}
\def\cW{{\cal W}}

\def\L{\Lambda}

\def\1{{\mathbbm 1}}

\def\uone{{\rm U(1)}}

\def\diff{{\rm Diff}}

\def\diffs1{\diff(S^1)}

\def\psl2r{{\rm PSL}(2,\RR)}
\def\sl2r{{\rm SL}(2,\RR)}
\def\su11{{\rm SU}(1,1)}
\def\2dmob{{\overline{\psl2r}\times\overline{\psl2r}}}
\def\<{\langle}
\def\>{\rangle}

\def\cl{\colon}

\newcommand{\Jone}{J_{[1]}}
\newcommand{\Jtwo}{J_{[2]}}
\newcommand{\aone}{a_{[1]}}
\newcommand{\atwo}{a_{[2]}}

\newcommand{\Ttwozero}{T_{[2],0}}
\newcommand{\Lone}{L_{[1]}}
\newcommand{\Ltwo}{L_{[2]}}

\newtheorem{theorem}{Theorem}[section]

\newtheorem{corollary}[theorem]{Corollary}
\newtheorem{proposition}[theorem]{Proposition}
\newtheorem{lemma}[theorem]{Lemma}
\theoremstyle{remark}
\newtheorem{remark}[theorem]{Remark}

\title{Unitary representations of the $\cW_3$-algebra with $c\geq 2$}
\date{} 
\author{
{\bf Sebastiano Carpi}\footnote{Supported in part by ERC advanced grant 669240 QUEST ``Quantum Algebraic Structures and Models'' and GNAMPA-INDAM.},\\
   Dipartimento di Matematica, Universit\`a di Roma Tor Vergata\\
   Via della Ricerca Scientifica 1, I-00133 Roma, Italy\\
   email: {\tt carpi@mat.uniroma2.it}\\
\\
{\bf Yoh Tanimoto}\footnote{Supported by Programma per giovani ricercatori, anno 2014 ``Rita Levi Montalcini''
of the Italian Ministry of Education, University and Research.
}
\\
   Dipartimento di Matematica, Universit\`a di Roma Tor Vergata\\
   Via della Ricerca Scientifica 1, I-00133 Roma, Italy\\
   email: {\tt hoyt@mat.uniroma2.it}\\
\\
{\bf Mih\'aly Weiner}\footnote{Supported in part by the Bolyai J\'anos and Bolyai+ scholarships,
by the NRDI grants K 124152, KH 129601, K 132097 and
by the ERC advanced grant 669240 QUEST ``Quantum Algebraic Structures and Models''.}\\
MTA-BME Lend\"ulet Quantum Information Theory Research Group \\
Budapest University of Technology \& Economics (BME)\\
M\H{u}egyetem rkp. 3-9, H-1111 Budapest, Hungary\\
email: {\tt mweiner@math.bme.hu}
}
\begin{document}
\maketitle
\begin{abstract}
We prove unitarity of the vacuum representation of the $\cW_3$-algebra for all values of the central charge $c\geq 2$.
We do it by modifying the free field realization of Fateev and Zamolodchikov resulting
in a representation which, 
by a nontrivial argument, can be shown to be unitary on a certain
invariant subspace, although it is not unitary on the full space
of the two currents needed for the construction.
These vacuum representations give rise to simple unitary vertex operator algebras.
We also construct explicitly unitary representations for many positive lowest weight values.
Taking into account the known form of the Kac determinants, we then completely clarify the question of unitarity
of the irreducible lowest weight representations of the $\cW_3$-algebra in the $2\leq c\leq 98$ region.

\end{abstract}

\section{Introduction}
The $\cW_N$ ($N=2,3,\cdots$) algebras are ``higher spin'' extensions of the Virasoro algebra \cite{Zamolodchikov85, FZ87, FL88},
with $\cW_2$ being the Virasoro algebra itself and $\cW_3$ in some sense the simplest one \emph{without} a Lie algebra structure.
For general $N$, the $\cW_N$-algebra is generated by $N-1$ fields, the first one of which
is the Virasoro field.
For some discrete values of the central charge $c<N-1$, 
they have been recently realized as a certain coset, showing unitarity of their vacuum representations
(i.e.\! the irreducible representations with zero lowest weights)
as well as many other representations \cite{ACL19}. In the Virasoro
case ($N=2$), this is the famous construction of Goddard, Kent and Olive \cite{GKO86} and the corresponding central charge values are
\[
 c=1-\frac{6}{m(m+1)}\;\;\; m=3,4,5\ldots
\]
whereas for the $\cW_3$-algebra,
these values are \cite{Mizoguchi89, Mizoguchi91}
\[
 c = 2\left(1-\frac{12}{m(m+1)} \right)\;\;\; m=4,5,6\ldots
\]
and in both cases $N=2,3$ it is known that there are no other unitary representations
in the $c<N-1$ region than the ones obtained in this manner.
Though this coset realization is recently generalized \cite{ACL19} to an even wider class of $\cW$-algebras, it is not expected to take us above the central charge value $c=N-1$, where rationality cannot hold.
Indeed, as far as we know, unitarity has never been shown for any central charge value $c>N-1\ge 2$.
Note that unlike in the Virasoro (or in the affine Kac-Moody) case, when $N\geq 3$ -- because of the lack of a Lie algebra structure --
one cannot simply produce representations of $\mathcal W_N$ by e.g.\! taking tensor products of known ones. Because of the difficulty of finding explicitly unitary constructions, some even expected the
$\cW_N$-algebras to not to have
unitary vacuum representations
for $c>N-1\ge 2$ (see e.g.\! \cite{ACHMP18}).
In this paper, we prove in fact that the vacuum representation of the $\cW_3$-algebra
is unitary for \emph{any} value of the central charge $c\ge 2$.

In the Virasoro case, unitarity for $c>1$ can be settled using Kac determinants
see e.g.\! \cite[Section 8.4]{KR87}.
At any ``energy level'' (i.e.\! eigenspace of the conformal Hamiltonian), the Kac determinant is a polynomial of the central charge $c$ and lowest weight $h$. Since all Kac determinants are strictly positive in the region $\{(c,h):c>1,h>0\}$,
by a continuity argument, unitarity in a single case inside that region
(which can be easily obtained e.g.\! by taking tensor products) implies
unitarity for the whole closure $\{(c,h):c \ge 1,h \ge 0\}$.
In case of the $\cW_3$-algebra, the difficulty is twofold.
First, one cannot obtain unitary representations with $c>2$
by tensor product. Second, the Kac determinants -- which are this time rational functions of the central charge $c$
and lowest weights $h, w$ and are explicitly worked out in \cite{Mizoguchi89} by Mizoguchi
-- show that when $c>2$, no irreducible lowest weight representation can be unitary in a neighbourhood of $h=w=0$
(apart from the vacuum itself). Hence the physically most important representation, the vacuum one, cannot be accessed in this manner from the $(h,w)\neq (0,0)$ region. With the usual indirect method ruled out,
we are lead to consider unitarity in a more constructive approach.

The explicit construction of unitary vacuum representations in the $c>N-1$ region is not trivial even in the Virasoro ($N=2$) case.
Buchholz and Schulz-Mirbach \cite{BS90} provided an interesting construction in this regard. They first realized the Virasoro algebra with central charge $c>1$ with the help of the $\uone$-current (a field whose Fourier modes form a representation of the Heisenberg algebra) in a -- strictly speaking --
non-unitary way. These representations (which we simply call the BS-M construction) turn out to be
``almost unitary'': the only problem is a singularity at just one point
(indeed, they only needed their construction to be defined on the punctured circle). As observed in \cite{Weiner08}, the BS-M construction may be viewed as a non-unitary representation of the Virasoro algebra admitting an invariant subspace containing the vacuum vector $\Omega$, on which it \emph{is} unitary. Inspired by the BS-M construction and the mentioned observation,
we start with a pair of commuting $\uone$-currents in their unitary
vacuum representation and modify them so that the
Fateev-Zamolodchikov free field realization of the $\cW_3$-algebra \cite{FZ87}
associated with this modified representation of the Heisenberg algebra
gives a stress-energy field corresponding to the BS-M one.
Similarly to the BS-M case, the obtained new stress-energy and $W(z)$ fields will not give a unitary representation of the $\cW_3$-algebra on the full
space but they become so on a subspace generated by $\Omega$. However, the proof of this relies on a rather involved argument exploiting the degeneracy of the vacuum representation:
the same construction with nonzero lowest weights does \emph{not} have unitarity on the minimal invariant
subspace containing the lowest weight vector. 

Whereas unitarity of the vacuum is difficult to treat, it turns out that some \emph{non}-vacuum representations can be shown to be unitary in a relatively 
simple, constructive manner.
Making another suitable use of the realization of Fateev and
Zamolodchikov, we obtain a manifestly unitary representation of the $\cW_3$-algebra on a full unitary
representation space of two $\uone$-currents.
In this way,
we produce unitary representations with $h\geq \frac{c-2}{24}\geq 0$ and $w$ limited in a certain interval depending on $c$ and $h$.
This is similar to the Virasoro
case, where an oscillator representation with a modified Sugawara construction
gives manifestly unitary representations
for all $h\geq \frac{c-1}{12}\geq 0$;
see e.g.\! \cite[Section 3.4]{KR87}.

Having already found some unitary representations, one can use the known form of the Kac determinant to arrive at even further values of $c,h$ and $w$. In this way, for $2\leq c\leq 98$ we completely clear
the question of unitarity. When $c>98$, determining the sign of the Kac determinant becomes harder; our results there remain partial.

This paper is organized as follows.
In Section \ref{preliminaries} we give a
summary of formal series with operator coefficients on Hermitian
vector spaces and on the $\cW_3$-algebra, the current algebras and their
representations. 
Apart from self-containment, we use the occasion to fix notations and conventions. An important tool for unitarity, the Kac determinant, is also introduced. 
Our main results are in Section \ref{unitarity}, where we prove the unitarity of various representations of the $\cW_3$-algebra and completely classify unitary lowest weight representations with central charge 
$c \in [2,98]$.
We also briefly explain in a remark how each unitary vacuum
representation gives rise to a simple unitary vertex operator
algebra.
Finally, in Section \ref{outlook} we collect possible future directions and open problems.

The non-constructive part of our work (where we exploit Kac determinants) is based on the existence of lowest
weight representations with invariant forms. Yet, as the $\cW_3$-algebra is not a Lie algebra,
the existence of lowest weight representations with invariant forms for all values of lowest weights is not straightforward. Though implicit in the literature, we could not find a reference suitable for our needs, so we added an 
Appendix \ref{verma} to our work where we clarify this issue by a novel, analytic method.

\section{Preliminaries}\label{preliminaries}
\subsection{Formal  series and fields}\label{formal}
Let $V$ be a vector space and $A_n:V \to V (n \in \ZZ)$ be a sequence of linear operators acting on $V$. We say that the formal  series $A(z)=\sum_{n\in \ZZ}A_{n} z^{-n}$ is a {\bf field} on $V$ if for every $v\in V$,
there is $n_v$ such that $A_n v=0$ whenever $n\ge n_v$. We 
shall refer to the operators $\{A_n\}_{n\in \ZZ}$ as the {\bf Fourier modes} of $A(z)$.

The (formal) derivative of $A(z)=\sum_{n\in \ZZ}A_n z^{-n}$ is $\partial_z A(z) = \sum_{n\in \ZZ} (-n)A_{n}z^{-n-1}$; note that if $A(z)$ is a field, so is $\partial A(z)$.
When $A(z), B(\zeta)$ are two formal  series, the product $A(z)B(\zeta)$ is a formal  series
in two variables $z,\zeta$ and we shall use the notations $\partial_\zeta, \partial_z$ in the obvious way. Moreover, we shall also use the notation
\begin{align}\label{eq:derivative}
A'(z) := iz\partial_z A(z) = \sum_{n\in \ZZ} (-in) A_n z^{-n},
\end{align}
which we call the ``derivative along the circle''.

Although the product of two formal  series of the same variables does not make sense in general,
there are some pairs of formal  series that can be multiplied. For example, the product of a formal series in variables $z$ and $\zeta$ of the form $B(z/\zeta)$ with any other formal  series in \emph{either} $z$ \emph{or} $\zeta$ (but not in both!) makes sense. In particular, the product 
$\delta(z-\zeta)A(\zeta)$, where 
 $\delta(z-\zeta) := z^{-1}\sum_n (\frac \zeta z)^n$ is the {\bf formal delta function}, is well-defined; see more at \cite[Section 2.1]{Kac98}.
Also, if $B(z)=\sum_{n\in \ZZ}B_n z^{-n}$ is a field then an infinite sum of the form
\[
\sum_{n\geq N} A_{k-n}B_n 
\]
(where $N,k\in \ZZ$) becomes finite on every vector and hence gives rise to a well-defined linear map. In particular, every field can be multiplied with a formal  series of the form $\sum_{n\leq N}c_n z^{-n}$ (where the coefficients $c_n$ may be scalars or themselves linear maps). It then turns out that if 
$F(z)=\sum_{n\in \ZZ}F_{(n)}z^{-n-1}$ and $G(\zeta)$ are fields, then by setting
$F_+(z) := \sum_{n < 0} F_{(n)}z^{-n-1}, F_-(z) := \sum_{n \ge 0} F_{(n)}z^{-n-1}$, the {\bf normally ordered product}
\[
: F(z)G(\zeta): = F_+(z)G(\zeta) +  G(\zeta)F_-(z)
\]
is well-defined even at $z=\zeta$ (i.e. after replacing $\zeta$ by $z$)
and the obtained formal series $\cl F(z)G(z)\cl$ is again a field, see e.g.\! \cite[Section 3.1]{Kac98}. 
If $F(z)$ and $G(\zeta)$ commute, so do $F_\pm(z)$ and $G(\zeta)$, therefore,
$\cl F(z)G(\zeta)\cl = F(z)G(\zeta)$. Note that in general the normal product of fields is neither commutative nor associative; in particular, to have an unambiguous meaning, we need
to specify what we mean by the normal power $\cl F(z)^n\cl$.
Following the standard conventions, we define the $n$-th power in a recursive manner
by the formula $\cl F(z)^n\cl \, = \, \cl F(z)(\cl F(z)^{n-1}\cl)\cl$, and more in general,
$\cl F_1(z)F_2(z)\cdots F_n(z)\cl \, = \, \cl F_1(z)(\cl F_2(z)\cdots F_n(z)\cl)\cl$.

\subsection{Formal adjoints of formal  series and fields}
Let $V$ be a $\CC$-linear space equipped 
with a Hermitian form $\<\cdot, \cdot\>$ (i.e.\! a self-adjoint sesquilinear form) and 
$A,B: V\to V$ linear operators. If
\begin{align}
\label{formal_adjoint}
 \<B\Psi_1, \Psi_2\> = \<\Psi_1, A\Psi_2\>, \quad \text{ for all } \Psi_1, \Psi_2 \in V,   
\end{align}
then we say that $A$ and $B$ are {\bf  adjoints} of each other and with some abuse of notation we write $B=A^\dagger$. Note however, the following: 1) such an $A^\dagger$ might not exist,
2) when $\<\cdot, \cdot\>$ is degenerate, $A^\dagger$ may not even be unique.
Nevertheless, for any two operators
$A,B$ the statement $B=A^\dagger$ is unambiguous: it simply means that they
satisfy equation \eqref{formal_adjoint}. We also say that $A$ is
{\bf symmetric}\footnote{We use ``symmetric'' instead of ``self-adjoint'' in order to avoid confusions
with the notion of self-adjoint operator on a Hilbert space in view of a possible Hilbert space completion
of the vector space $V$. This ``symmetry'' has nothing to do with symmetric operators with respect to
a \emph{bilinear} (instead of sesquilinear) form.} when $A=A^\dagger$.

We define the adjoint of the formal  series $A(z)=\sum_{n\in \ZZ}A_n z^{-n}$ to be the formal series 
\[
A(z)^\dagger := \sum_{n\in \ZZ}A^\dagger_n z^{n},
\]
i.e.\! we treat the variable $z$ as if it were a complex number in $S^1:= \{z\in \CC : \, |z|=1 \}$.
As a direct consequence of our definition, $A(z)$ is {\bf symmetric} -- that is, $A(z)^\dagger = A(z)$ as 
formal series -- if and only if $A_n^\dagger = A_{-n}$ for all $n\in\ZZ$.
Moreover, if $A(z)$ is symmetric, then so is its circle derivative $A'(z)$ of \eqref{eq:derivative}:
this is exactly why we shall prefer it to $\partial_z A(z)$.
Note that this is also the convention found in the paper \cite{BS90} of Buchholz and Schulz-Mirbach.

If $f(z)$ is a {\bf trigonometric polynomial}, i.e.\! a finite series $f(z)=\sum_{|n|<N }c_n z^{-n}$, and $A(z)$ is a symmetric field, then one finds that
\[
(f(z)A(z))^\dagger = \overline{f}(z) A(z)\;\;\;
\textrm{where}\;\;
\overline{f}(z):= \sum_{|n| < N}\overline{c_{-n}} z^{-n}.
\]
In particular, if $\overline{c_n}=c_{-n}$ for all $n$ -- or
equivalently: if $f$ takes only real values on $S^1$ -- then $f(z)A(z)$ is symmetric. This is not surprising at all; in fact, more in general, one has that if $A(z)$ and $B(z)$ are commuting symmetric fields, then their product
$A(z)B(z)$ is also a symmetric field.
However, in this paper we shall often consider expressions of the type $\rho(z)A(z), \rho'(z)A(z)$ where $\rho(z) = -i\frac{z-1}{z+1}$.
In order to give an unambiguous meaning\footnote{From the point of view of quantum field theory (cf.\! \cite{BS90}),
$\rho$ should be regarded as a \emph{function} on $S^1 \setminus \{-1\}$ rather than a \emph{formal series};
depending on the choice of region, it has different expansions.}
to the expression $\rho(z)A(z)$, we take the expansion around $z=0$, where it holds that
\begin{align}\label{eq:rho}
 \rho(z) = -i\frac{z-1}{z+1}
= -i(z-1)\sum_{n\ge 0} (-1)^n z^n = i\left(1 + 2\sum_{n\ge 1} (-1)^n z^n\right)=: \sum_n \rho_n z^{-n}.
\end{align}
Accordingly we regard $\rho(z)$ as a field (note that $\rho_n = 0$ for $n > 0$), and since it is scalar valued, it commutes with anything and  its product with another field $A(z)$ is meaningful without need of normal ordering: $\rho(z)A(z) = \sum_{n,k} (\rho_k A_{n-k})z^{-n}$.
Similarly, the product $\rho'(z)A(z)$, with $\rho'(z)$ given by \eqref{eq:derivative},
is defined as a field.

Although $\rho(z)$ is not defined at $z=-1$ as a function (it has a singularity there),
it takes only real values on the punctured circle $S^1\setminus \{-1\}$ and hence so does its circle 
derivative $\rho'(z)$. So one might wonder whether $\rho(z)A(z)$ and $\rho'(z)A(z)$ are still symmetric if $A(z)$ is a symmetric field. A quick check reveals that the answer in general is \emph{negative}:
the problem is caused by the non-symmetric expansion \eqref{eq:rho}.
But if $r(z)$ is a trigonometric polynomial and $r(-1)=0$, then the singularity of $r(z)\rho(z)$ at $z=-1$ is removable. Actually, it is clear that
in this case $r(z)=(z+1)t(z)$ where $t$ is another trigonometric polynomial and hence 
$s(z)=r(z)\rho(z) = -i\frac{z-1}{z+1} (z+1) t(z) = -i(z-1)t(z)$ is also a trigonometric polynomial
for which $\overline{s}(z)= \overline{r}(z)\rho(z)$. Hence in this case 
\[
(r(z)\rho(z)A(z))^\dagger = \overline{r}(z) \rho(z)A(z),
\]
as if $\rho(z)A(z)$ were symmetric. If further $r'(-1)=0$, then 
also the singularity of $r(z)\rho'(z)$ will be removable, resulting in
$(r(z)\rho'(z)A(z))^\dagger = \overline{r}(z) \rho'(z)A(z)$.
These observation will become important in the proof of unitarity of vacuum representations.     

\subsection{The \texorpdfstring{$\cW_3$}{W3}-algebra}\label{w3}
For our purposes the $\cW_3$-algebra (see \cite{BS93, Artamonov16} for reviews) at central charge $c\in \CC$, $c\neq -\frac{22}{5}$, consists of two fields $L(z)=\sum_{n\in\ZZ}L_n z^{-n-2}$ and $W(z)=\sum_{n\in\ZZ}W_n z^{-n-3}$ acting on a $\CC$-linear space $V$ such that
\begin{align}\label{eq:commfield}
 [L(z), L(\zeta)] &= \delta(z-\zeta)\partial_\zeta L(\zeta) + 2\partial_\zeta \delta(z-\zeta) L(\zeta)
 + \frac c{12}\partial^3_\zeta \delta(z-\zeta), \nonumber \\
 [L(z), W(\zeta)] &= 3 \partial_\zeta \delta(z-\zeta) W(\zeta) + \delta(z-\zeta)\partial_\zeta W(\zeta), \nonumber\\
 [W(z), W(\zeta)] &= \frac c{3\cdot 5!} \partial_\zeta^5 \delta(z-\zeta) + \frac13 \partial_\zeta^3\delta(z-\zeta) L(\zeta)
  + \frac12 \partial_\zeta^2\delta(z-\zeta)\partial L(\zeta) \nonumber \\
  &\;\;+ \partial_\zeta \delta(z-\zeta) \left(\frac 3{10} \partial_\zeta^2 L(\zeta) + 2b^2 \Lambda(\zeta) \right)
  + \delta(z-\zeta) \left(\frac1{15}\partial_\zeta^3 L(\zeta) + b^2 \partial_\zeta \Lambda(\zeta)\right)
\end{align}
where $b^2 = \frac{16}{22+5c}$ and $\L(z)=\;:L(z)^2: -
 \frac3{10} \partial_z^2 L(z)$.
Equivalently, in terms of Fourier modes 
the requirements read
\begin{align}
 [L_m, L_n] &= (m-n)L_{m+n} + \frac{c}{12}m(m^2-1)\delta_{m+n,0}, \nonumber\\
 [L_m, W_n] &= (2m-n)W_{m+n}, \nonumber \\
 [W_m, W_n] &= \frac{c}{3\cdot 5!}(m^2-4)(m^2-1)m\delta_{m+n,0} \nonumber \\
            &\quad + b^2(m-n)\L_{m+n} + \left[\frac1{20}(m-n)(2m^2-mn+2n^2-8))\right]L_{m+n}, \label{eq:comm}
\end{align}
where again $b^2 = \frac{16}{22+5c}$ and $\L_n = \sum_{k>-2} L_{n-k} L_k + \sum_{k\le-2} L_k L_{n-k} - \frac3{10}(n+2)(n+3)L_n$.
The first of these commutation relations says that the operators $\{L_n\}_{n\in\ZZ}$ form a representation of the {\bf Virasoro algebra} and consequently, we shall say that $L(z)$ is a {\bf Virasoro} (or alternatively: a {\bf stress-energy}) {\bf field}. 

Note that one cannot consider (\ref{eq:comm}) (together with the definitions of $b$ and $\L_n$)
as the defining relations of an associative algebra (as it is sometimes loosely stated in the literature), since the infinite sum appearing in $\L_n$ does not have an \emph{a priori} meaning: it makes sense if $\{L_n\}$ form a field on $V$.
Under the term ``$\cW_3$-algebra'', one studies \emph{general properties} that hold for operators $\{L_n, W_n\}_{n\in \ZZ}$ satisfying the above relations. On the other hand, a concrete realization  on a linear space is referred to as a {\bf representation}, although we do not define here
an associative algebra called the $\cW_3$-algebra. A universal object with these relations can be defined in the context of vertex operator algebras \cite{DK05, DK06}; however, here we do not wish to follow that way.

We shall say that a Hermitian form $\<\cdot,\cdot\>$ is
{\bf invariant} for a representation of the $\cW_3$-algebra, 
if it makes the fields
\[
 T(z) := z^2 L(z)=\sum_{n} L_n z^{-n}, \quad M(z) := z^3W(z) =\sum_{n} W_n z^{-n} 
\]
symmetric. Equivalently, in terms
of Fourier modes, the requirement of invariance is that $L_n^\dagger = L_{-n}$ and $W_n^\dagger=W_{-n}$
for all $n\in\ZZ$. A representation together with an \emph{inner product} -- or as is also called: \emph{scalar product} -- 
(i.e.\! a positive definite Hermitian form) is said to be {\bf unitary}. 

Note that while
in papers concerned with vertex operator algebras, the Virasoro field is typically
denoted by $L(z)$ (as in our work), physicists often use $T(z)$ for the same object. Here we chose to reserve this symbol for the ``shifted'' field $T(z)=z^2 L(z)$ in part to follow the notations of \cite{BS90} used   
by Buchholz and Schulz-Mirbach, and in part simply because being interested by unitarity, we will
actually use more the combination $z^2 L(z)$ than $L(z)$ on its own.

\subsection{\texorpdfstring{The $\uone$-current (or Heisenberg) algebra}
{The U(1)-current (or Heisenberg) algebra}}\label{u1}
The $\uone$-current  (or Heisenberg) algebra is an infinite-dimensional Lie algebra spanned freely
by the elements $\{a_n\}_{n\in\ZZ}$ and a central element $Z$ with commutation relations
\begin{align}\label{eq:u(1)}
 [a_m, a_n] = m\delta_{m+n,0} Z.
\end{align}

We shall be only interested in representations of this algebra where $Z$ acts as the identity and the formal  series 
\[
 a(z) = \sum_{n\in\ZZ} a_n z^{-n-1}
\]
(where, by the usual abuse of notations, we denote 
the representing operators with the same symbol as
the abstract Lie algebra elements) is a field. Note that in many relevant works regarding the $\cW_3$-algebra and published in physics journals, this field appears as ``the derivative of the massless free field'' and is denoted by $\partial_z\varphi(z)$ (e.g.\! in \cite{FZ87} and in \cite{Mizoguchi89}), although in our sense, in general\footnote{Unless we are in a representation where $a_0=0$} there is no field $\varphi(z)$ whose derivative is $a(z)$. Note also that the commutation relation \eqref{eq:u(1)} with $Z:= 1$
is equivalent to 
\begin{equation}
\label{eq:aa}
[a(z), a(\zeta)] = \partial_\zeta \delta(z-\zeta).
\end{equation}

Suppose now that we are also given a Hermitian
form $\<\cdot,\cdot\>$ on our representation space. We say that it is
{\bf invariant} for our representation, if it makes 
\[
 J(z):= za(z) =\sum_{n\in\ZZ} a_n z^{-n}
\]
symmetric; this is equivalent to
the condition $a_n^\dagger=a_{-n}$ for all $n\in \ZZ$. A representation together with
an invariant inner product, i.e.\! an invariant \emph{positive definite} Hermitian form,
is said to be  {\bf unitary}.

Similarly to what we did for $J(z)$ and $a(z)$, we also introduce in general the ``shifted'' 
normal powers $:J^{n}: \!(z) = z^n : a(z)^n:$. Again, the reason for working with them
(rather than with the usual powers\footnote{Note that $:J^n:(z) = z^n :a(z)^n:$ is \emph{different} from the
$n$-th normal power $:J(z)^n:= :(za(z))^n:$.}) is symmetry:
given an invariant Hermitian form, it is this combination which becomes symmetric. For example, for $n=2$ we have
\[
 : J^{2}: \!(z)=z^2: a(z)^2:  = za_+(z)\cdot za(z) + za(z)\cdot za_-(z).
\]
Now $za_+(z) = \sum_{n < 0} a_n z^{-n}$ and hence $(za_+(z))^\dagger = za_-(z) - a_0$. Moreover,
as $a_0^\dagger = a_0$ commutes with all $a_n$, putting all together we have that
\[
: J^{2}: \!(z)^\dagger = za(z)\cdot (za_-(z) - a_0) + (za_-(z) + a_0)\cdot za(z) 
 = : J^{2}: \!(z).
\]
For higher powers, symmetry of $: J^{n}: \!(z)$ is justified in a similar manner.

If $a(z)=\sum_{n\in\ZZ} a_n z^{-n-1}$ is a field 
satisfying the commutation relation \eqref{eq:aa}, then
its {\bf associated} (or {\bf canonical}) 
stress-energy field is
\[
 L(z) = \sum_{n\in \ZZ}L_n z^{-n-2} = \frac12 : a(z)^2:.
\]
Its Fourier modes $L_n=\frac{1}{2}(\sum_{k > -1} a_{n-k}a_k + \sum_{k \le -1} a_k a_{n-k})$ form a representation of the Virasoro algebra with central charge $c=1$.
By elementary computations, $[L_n,a_m] = -m a_{n+m}$ and it then 
follows that for any $\eta,\kappa\in \CC$, the operators
\begin{align*}
L_n -i\kappa n a_n + \eta a_n \;\; (n\neq 0),\;\;\; 
L_0 + \eta a_0+ \frac{\kappa^2+\eta^2}{2},
\end{align*}
also form a representation of the Virasoro algebra with 
central charge $c(\eta,\kappa)=1+12\kappa^2$;
see e.g. \cite[Section 3.4]{KR87}.
Using circle derivatives, the corresponding ``shifted'' stress-energy field can be written as
\begin{align}
\label{constrKR87}
\frac12 : J^2:\!(z) +\kappa J'(z) + \eta J(z) + \frac{\kappa^2+\eta^2}{2}.
\end{align}

For the formal series $J(z)=za(z)=\sum_{n\in\ZZ}a_n z^{-n}$ where $a(z)$ satisfies (\ref{eq:aa}),
a nonzero vector $\Omega_q$ is said to be a {\bf lowest weight vector} with lowest weight $q\in\CC$ if 
\[
 \text{for all } m>0:\;a_m\Omega_q =0,\;\;\;\; a_0\Omega_q = q\Omega_q.
\]
If $\Omega_q$ is also cyclic, then the whole representation is said to be a {\bf lowest weight representation}. It turns out that for every $q\in\CC$, such a representation exists (up to equivalence) uniquely; this is the Verma module  $\underline{V}^\uone_q$. In this representation one has that vectors of the form
\[
 a_{-n_1}\cdots a_{-n_k}\Omega_q,
\]
where $1 \le n_1 \le \ldots \le n_k$, form a basis, the formal series $a(z)$ is a field and further that $a_0$ is the (multiplication by the) scalar $q$. Moreover, when $q \in \RR$, there exists a unique Hermitian form $\<\cdot,\cdot\>$ on
$\underline{V}^\uone_q$ with normalization
$\<\Omega_q,\Omega_q\>=1$, which is invariant for the representation
(the ``canonical Hermitian form''). This form is automatically positive definite,
making the representation unitary. For proofs of these statements see e.g.\! \cite{KR87}.

\subsection{Lowest weight representations of the \texorpdfstring{$\cW_3$}{W3}-algebra}\label{lowest}

Given a representation of the $\cW_3$-algebra $\{L_n,W_n\}_{n\in\ZZ}$ with central charge $c$, a nonzero vector $\Omega_{c,h,w}=:\Omega$ is said to be a {\bf lowest weight vector} with lowest weight $(h,w) \in \CC^2$, if 
\begin{equation}\label{eq:weights}
\text{for all } n>0: L_n\Omega = W_n\Omega = 0, \;\;\text{and}
\; L_0\Omega = h\Omega, \; W_0 = w\Omega.
\end{equation}
In case $h=w=0$, $\Omega$ is said to be a {\bf vacuum vector}. In case the lowest weight vector is cyclic, the whole representation is 
said to be a {\bf lowest weight representation}.

Using the $\cW_3$-algebra relations, it is rather easy
(however, the induction should go with respect to $g$ in Appendix \ref{verma} instead of the number
of operators, see e.g.\! \cite{BMP96})
to show that for any lowest weight representation, the vectors of the form
\begin{align}\label{eq:vector}
 L_{-m_1}\cdots L_{-m_\ell}W_{-n_1}\cdots W_{-n_k}\Omega,
\end{align}
where $1 \le m_1\le \cdots \le m_\ell, 1 \le n_1\le \cdots \le n_k$, span the whole representation space.
However, in general, linear independence does not follow. 
Nevertheless, for each central charge $c \neq -\frac{22}5$ and lowest weight $(h,w) \in \CC^2$ there is indeed a representation, the {\bf Verma module} $\underline{V}^{\cW_3}_{c,h,w}$, where 
these vectors form a basis. It is rather clear that such a representation is essentially unique;
what is less evident, is its existence.
For a Lie algebra, Verma modules are constructed as a quotient of the universal covering algebra,
see e.g.\! \cite{Jacobson79}.
As the $\cW_3$-algebra is not a Lie algebra and the commutator $[W_m, W_n]$ contains an infinite sum
in $L$'s, it is actually nontrivial that Verma modules exist. We show this in a novel, analytic manner in Appendix \ref{verma}.

Using the $\cW_3$-algebra relations, it is not difficult to see that the Verma module can admit at most one
invariant Hermitian form $\<\cdot, \cdot\>$ 
with normalization $\<\Omega_{c,h,w}, \Omega_{c,h,w}\> = 1$. We will call this the ``canonical'' form. 
It is also rather trivial that if $c,h,w$ are not all real, then such a Hermitian form cannot exists.
Again, what is less evident is the existence for $c,h,w \in \RR$. We give a proof of this fact in Appendix \ref{verma}.
Since the goal of this paper is to deal with unitarity, we will focus on the case when 
$c,h,w\in \RR$. 

Let us now take some $c,h,w\in \RR$, $c\neq -\frac{22}{5}$.
Any nontrivial subrepresentation in the Verma module is included in the kernel
\[
 \ker \<\cdot,\cdot\> = \{\Psi \in \underline{V}^{\cW_3}_{c,h,w}: \<\Psi, \Phi\> = 0
\text{ for all }\Phi \in \underline{V}^{\cW_3}_{c,h,w}\},
\]
see the arguments of \cite[Proposition 3.4(c)]{KR87}.
It then turns out that with the given values of 
$c,h,w$, there is (an up-to-isomorphism) unique irreducible lowest weight representation $V^{\cW_3}_{c,h,w}$:
namely, the one obtained by taking
the quotient of the Verma module with respect to $\ker \<\cdot,\cdot\>$. The canonical form on a Verma module is positive semidefinite
if and only if the corresponding irreducible representation admits a invariant inner
product, making it unitary. 

Actually, standard arguments show  that (for given $(c,h,w)$) \emph{any} lowest weight representation with a non-degenerate, invariant Hermitian form $\<\cdot,\cdot\>$ is isomorphic
to the unique irreducible representation. This is due to the fact that the value of 
$\<\Psi,\Psi'\>$, where $\Psi,\Psi'$ are vectors of the form \eqref{eq:vector}, is
``universal'': it depends on $c,h,w$ but \emph{not} on the actual representation;  
see Proposition \ref{pr:invariant}.
In particular, for each triplet $(c,h,w)$, there is (up to isomorphism) at most
one lowest weight representation with an invariant inner product; namely, $V^{\cW_3}_{c,h,w}$.

\subsection{The Kac determinant}
\label{Kacdet}
The question of when the canonical form $\<\cdot,\cdot\>$ on the Verma module $\underline{V}^{\cW_3}_{c,h,w}$
is degenerate or positive semidefinite can be studied through the Kac determinant.
See \cite[Chapter 8]{KR87} for an overview of the methods used here, which are written for
infinite-dimensional Lie algebras, but apply to the $\cW_3$-algebra as well.

The Hermitian form $\<\cdot,\cdot\>$ vanishes on pairs of vectors of the form \eqref{eq:vector}
when the eigenvalue $N= \sum_j m_j + \sum_j n_j$
 of $L_0$ are different, hence the question can be studied for each $N \ge 0$ separately.
There are finite many vectors $\Psi^{(N)}_1,\ldots \Psi^{(N)}_{d_N}$ among \eqref{eq:vector} for any given $N$
that span a finite dimensional subspace in $\underline{V}^{\cW_3}_{c,h,w}$ and one can consider
the Gram matrix $M_{N,c,h,w}$ whose entries are the product values $\<\Psi^{(N)}_j,\Psi^{(N)}_k\>$. 
Note that these values are real polynomials of $c,\frac{1}{22+5c},h,w$ (see Appendix \ref{verma}).
Evidently, we have the following.
\begin{itemize}
 \item $\underline{V}^{\cW_3}_{c,h,w}$ is irreducible if and only if all of these matrices are nondegenerate.
 
 \item The canonical form on $\underline{V}^{\cW_3}_{c,h,w}$ is
 positive (semi)definite if and only if these matrices are all positive (semi)definite.

\end{itemize}
However, it is difficult to determine the rank and positive (semi)definiteness of all these matrices at once.
Nevertheless, a rather compact formula can be given for the determinant $\det(M_{N,c,h,w})$ at level $N$ 
-- called the {\bf Kac determinant} -- of these matrices. We can use it in the following ways.
\begin{itemize}
 \item If $\underline{V}^{\cW_3}_{c,h,w}$ is reducible, then $\det(M_{N,c,h,w}) = 0$ for some $N$.
 \item If the canonical form on $\underline{V}^{\cW_3}_{c,h,w}$ is
 positive-definite, then $\det(M_{N,c,h,w}) > 0$ for all $N$.
\end{itemize}
At each level $N$, $\det(M_{N,c,h,w})$ is a polynomial of $c,\frac{1}{22+5c},h,w$.
Therefore, if one finds a vector in $\ker \<\cdot,\cdot\>$ in a Verma module $\underline{V}^{\cW_3}_{c,h,w}$,
one can extract a factor from $\det(M_{N,c,h,w})$ for some $N$.
With sufficiently many such vectors in $\ker \<\cdot,\cdot\>$, one can determine $\det(M_{N,c,h,w})$
up to a multiplicative positive constant. 
According to \cite{Mizoguchi89, ACHMP18}, the Kac determinant at level $N$ is
\[
 \det(M_{N,c,h,w})\sim \prod_{k=1}^N \prod_{mn=k} (f_{mn}(h,c) - w^2)^{P_2(N-k)},
\]
where ``$\sim$'' means equality up to a positive multiplicative constant that can depend on $N$
(but not on $c,h,w$) and
\[
 \sum_{n=0}^\infty P_2(n)t^n = \prod_{n=1}^\infty \frac1{(1-t^n)^2}
\]
and
\begin{align}\label{eq:fmn}
 f_{mn}(h,c) &= \frac{64}{9(5c+22)} \left[h + (4-n^2)\a_+^2 + (4-m^2)\a_-^2 - 2 + \frac{mn}2\right] \nonumber\\
 &\qquad\qquad\qquad \times \left[h - 4((n^2-1)\a_+^2 + (m^2-1)\a_-^2) - 2(1-mn)\right]^2
\end{align}
with 
\begin{align*}
\a_\pm^2 = \frac{50-c \pm \sqrt{(2-c)(98-c)}}{192}.
\end{align*}
We shall exploit the knowledge of the signs of the Kac determinant (given by these explicit formulas) in two ways: 
\begin{itemize}
\item
 Let $H \subset \RR^3$ be a connected set where for any $(c,h,w)\in H$ and any $N\in\NN$ it holds that
 $\det(M_{N,c,h,w})> 0$. In this situation, if $V^{\cW_3}_{c',h',w'} = \underline{V}^{\cW_3}_{c',h',w'}$
 is unitary for at least one triple $(c',h',w')\in H$, then it is so for all triples in 
 the closure $\overline{H}$.
 \item If $\det(M_{N,c,h,w}) < 0$ for some $N\in\NN$, then $\underline{V}^{\cW_3}_{c,h,w}$
 is not unitary.
\end{itemize}

By the observation of \cite[(A.10)]{ACHMP18},
if $2 < c < 98$, 
the contributions from $f_{mn}$ with $m\neq n$ are non-zero positive because $\a_\pm$ in \eqref{eq:fmn}
have non-zero imaginary parts, and since
\begin{align*}
 f_{mm}(c,h) = \frac{((c-2)m^2 - c + 24h +2)^2(96h + (c-2)(m^2 -4))}{7776(5c+22)}
\end{align*}
is increasing with respect to $m$, hence all Kac determinants are positive if
\begin{align}\label{eq:f11}
 f_{11}(h,c)-w^2 = \frac{h^2(96h-3(c-2))}{27(5c+22)} - w^2> 0.
\end{align}
Note that regardless of the value of the central charge,  $f_{11}(h,c)-w^2 \ge 0$ is a necessary condition for unitarity
since $f_{11}(h,c)-w^2$ is the first Kac determinant up to a positive constant.

The case $h=0$ is of particular importance, as this is when the lowest weight vector is a ``vacuum vector'' for the Virasoro subalgebra.
From the observation above, unitarity together with $h=0$ implies $w=0$.

\section{Unitarity of lowest weight representations}\label{unitarity}

\subsection{The free field realization of Fateev and Zamolodchikov 
}
\label{free}
Given a pair of commuting fields $a_{[k]}(z)=\sum_{n\in \ZZ} J_{[k],n}z^{-n-1}$ $(k=1,2)$, both satisfying the $\uone$-current relation (\ref{eq:aa}),
one can construct a family of representations
of the $\cW_3$-algebra 
depending on a complex parameter $\alpha_0$.
Following Fateev and Zamolodchikov \cite{FZ87},
we set
\begin{align*}
 \tilde L(z;\a_0) 
 &= 
      L_{[1]}(z) + \sqrt 2\a_0\partial\aone(z) + L_{[2]}(z)
 \nonumber \\
 &=
  \frac12: \aone(z)^2:  + \frac12: \atwo(z)^2: + \sqrt 2\a_0\partial\aone(z),\\
 \tilde W(z;\a_0) 
 &= \frac{b}{12i} \big[i2\sqrt 2: \atwo(z)^3:  - i6\sqrt 2 :\aone(z)^2:\atwo(z) 
 -i6\a_0 \partial\aone(z)\atwo(z) 
 \nonumber \\
 &\qquad\qquad - i18\a_0 \aone(z)\partial\atwo(z) -i6\sqrt 2\a_0^2\partial^2\atwo(z)\big] 
 .
\end{align*}
\begin{theorem}[\cite{FZ87}]\label{th:fz}
Let $\a_0\in \CC$ be such that 
$c(\a_0):=2-24 \a_0^2 \neq -\frac{22}{5}$
and $b\in \CC$ such that $b^2 = \frac{16}{22+5c(\alpha_0)}$. Then the
above defined $ \tilde L(z;\a_0) , \tilde W(z;\a_0)$
fields satisfy the $\cW_3$-algebra relations \eqref{eq:commfield} with central charge $c(\a_0)$.
\end{theorem}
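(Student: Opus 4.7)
The plan is to verify the three $\cW_3$-algebra commutation relations \eqref{eq:commfield} directly from the basic contraction $[a_{[j]}(z), a_{[k]}(\zeta)] = \delta_{jk}\partial_\zeta\delta(z-\zeta)$ and the standard Wick-type rules for commutators of normally ordered products of $\uone$-currents. Since both sides depend polynomially on $\a_0$ (with $b^2$ appearing rationally in the last relation), it suffices to verify the identities in a single faithful realization, e.g.\! on a Fock module of two commuting currents. Moreover, since $\aone$ and $\atwo$ commute, cross contractions between them vanish and the combinatorics simplifies considerably.

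First I would check that $\tilde L(z;\a_0)$ is a Virasoro field of central charge $c(\a_0) = 2-24\a_0^2$. The piece $L_{[2]}(z) = \frac12 :\atwo(z)^2:$ is a standard Sugawara field with $c=1$, commuting with everything else. The piece $L_{[1]}(z) + \sqrt{2}\a_0\,\partial\aone(z)$ is a Feigin--Fuchs (background-charge) modification of the Sugawara field for $\aone$; it is the field-formalism counterpart of the shifted Virasoro \eqref{constrKR87} with a purely imaginary $\kappa$, and a direct commutator computation shows it gives a Virasoro field of central charge $1-24\a_0^2$. Adding the two independent contributions yields $c(\a_0)$.

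Second, I would verify that $\tilde W(z;\a_0)$ is a Virasoro primary of weight $3$ for $\tilde L(z;\a_0)$, i.e.\! that $[\tilde L(z), \tilde W(\zeta)]$ matches the second line of \eqref{eq:commfield}. Each of the five summands of $\tilde W$ is a normally ordered monomial of total conformal weight $3$ in $\aone,\atwo$ and their derivatives, and the commutator with $\tilde L$ is expanded summand by summand. The coefficients of the non-cubic summands ($\partial\aone\cdot\atwo$, $\aone\cdot\partial\atwo$, $\partial^2\atwo$) are pinned down precisely so that the anomalous, weight-shifting contributions produced by the background-charge term $\sqrt{2}\a_0\,\partial\aone$ in $\tilde L$ cancel against the non-primary corrections generated by normal-ordering of the cubic terms; what survives is the required $3\partial_\zeta\delta(z-\zeta)\tilde W(\zeta) + \delta(z-\zeta)\partial_\zeta\tilde W(\zeta)$.

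The hard step is the third: computing $[\tilde W(z;\a_0), \tilde W(\zeta;\a_0)]$ and matching it against the right-hand side of the third line of \eqref{eq:commfield}, with the quartic field $\Lambda(\zeta) = :L(\zeta)^2: - \frac{3}{10}\partial^2 L(\zeta)$ appearing with coefficient $b^2 = 16/(22+5c(\a_0))$. This requires pairing each of the five summands of $\tilde W(z)$ with each of the five summands of $\tilde W(\zeta)$, performing all Wick contractions, and collecting coefficients of $\partial_\zeta^k\delta(z-\zeta)$ for $k=0,\ldots,5$. The coefficient of $\partial_\zeta^5\delta$ reconfirms $c(\a_0)$; those of $\partial_\zeta^3\delta$ and $\partial_\zeta^2\delta$ reproduce $L$ and $\partial L$ with the standard numerical prefactors; the essentially new input appears at $\partial_\zeta\delta$ and $\delta$, where the double contractions of $:\atwo(z)^3:$ with itself and of $:\aone^2\atwo:$ with itself produce a quartic expression that one must recognise, modulo derivatives of $L$, as $2b^2\Lambda(\zeta)$ and $b^2\partial_\zeta\Lambda(\zeta)$ respectively. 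The main obstacle is the combinatorial bookkeeping -- tracking all the factors of $i$ and $\sqrt{2}$, reordering the normal products, and assembling the many partial contractions from the cubic-times-cubic pairings -- and the algebraic miracle is that the coefficient in front of the quartic piece comes out to exactly $32/(22+5c(\a_0))$, which is precisely what forces the stated relation between $b$ and $\a_0$. I would organize the computation by exploiting polynomial dependence on $\a_0$, matching coefficients of powers of $\a_0$ separately and reducing each to a finite number of elementary contraction identities in the Fock space.
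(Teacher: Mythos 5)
Your proposal is correct and follows essentially the same route as the paper: the theorem is established by direct verification of the three commutation relations from the basic current contraction via Wick/OPE calculus, with the paper deferring the (purely mechanical but lengthy) bookkeeping to \cite{FZ87} and to a computer-algebra check with the OPEdefs package rather than printing it. Your organization of the computation (Feigin--Fuchs shift for the Virasoro part, primarity of $\tilde W$, and the cubic-times-cubic contractions producing $2b^2\Lambda$ in the $[W,W]$ relation) matches the structure the paper describes in the remark following the theorem.
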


\begin{remark}
We think it useful to make some comments on the computations justifying the above theorem.
First of all, instead of commutation relations, it is more common to work in terms of
{\bf operator product expansions (OPEs)}. The OPE of two fields $F_1(z), F_2(z)$
is usually written in the form
\[
 F_1(z)F_2(\zeta) \sim \sum_{j=1}^N \frac{G_j(\zeta)}{(z-\zeta)^j},
\]
where $G_j(z), j = 1, \cdots, N$ are some other fields.
As formal  series, this relation should be interpreted as (see \cite[Theorem 2.3]{Kac98})
$ [F_1(z), F_2(\zeta)] = \sum_{j=1}^N \frac1{j!}\partial^j_\zeta \delta(z-\zeta)G_j(\zeta).$

It is possible to write the OPE between a field $F(z)$ and a normal product $: G(z)H(z):$
in terms of the OPE between $F,G,H$
and the fields appearing in their OPE; again, for details we refer to \cite{Kac98}. Thus, if the OPE algebra of the basic fields
is closed -- like in our case: $[\aone(z),\atwo(\zeta)]=0$ and $[a_{[j]}(z),a_{[j]}(\zeta)]=\partial_\zeta \delta(z-\zeta)$ ($j=1,2$) -- then in principle the OPE of any pair of normal products can be determined
in terms of the basic fields.
Therefore, Theorem \ref{th:fz} can be indeed proved only in terms of the commutation relation \eqref{eq:aa}.
Although actual computations of OPE of composite fields can be tedious and painful,
these computations are fortunately very established procedures and can be carried out by computers, too.
The most widely used software for this the Mathematica package\footnote{Mathematica scripts can be also executed on the freely download-able Wolfram Script; see more at \url{https://www.wolfram.com/wolframscript/}.}
OPEdefs \cite{Thielemans91} by Thielemans (although  
 there are also other packages, e.g.\! \cite{Ekstrand11}). As is indicated in the text, the authors of
\cite{RSW18} also used this package to make computations with OPEs related to the free-field
realizations of the $\cW$-algebras, and this is what we also used\footnote{We thank Simon Wood for providing us his own code he used for the computations in \cite{RSW18}.} in part to have an independent verification and in part to check that our constants (which, due to differing conventions, slightly differ from the one appearing in \cite{FZ87}) are indeed rightly set.
\end{remark}

Since we are interested by unitarity, it is worth rewriting our fields using the circle derivative
$F'(z)= iz\partial_z F(z)$ and performing computations with the ``shifted fields'' we introduced above.
Also, we prefer to make some different choices of variables -- e.g.\! instead of $\alpha_0$ as in the previous
theorem, we will use $\kappa:= -i\sqrt{2}\alpha_0$ -- so that in the unitary case we will need to deal with real constants, only. We thought it useful for the reader to summarize our conventions in a table (which are actually
mainly the ones used by Buchholz and Schulz-Mirbach in \cite{BS90} and hence will be referred as the ``B-SM conventions'') and put it in contrast with the one used by the physicist and the one used by the VOA community.
\begin{table}[ht]
\label{conv_table}
\begin{tabular}{|l|l|l|}
\hline
Physicist  & VOA & B-SM / ours \\
\hline
$\varphi(z)$ (the massless free field) & undefined & undefined \\
$i\partial_z \varphi(z)$ & $\sqrt 2 a(z)$ & $\sqrt 2 J(z)/z$ \\
$T(z)$ & $L(z)$ & $T(z)/z^2 $ \\
$W(z)$ & $W(z)$ & $M(z)/z^3$ \\
$i\partial_z^2 \varphi(z)$ & $\sqrt 2\partial_z a(z)$ & $ -\sqrt2 (J(z)+iJ'(z))/z^2$ \\
$i\partial_z^3 \varphi(z)$ & $\sqrt 2\partial^2_z a(z)$
 & $(2\sqrt 2 J(z) + i3\sqrt2 J'(z) - \sqrt2 J''(z))/z^3$ \\
$-\frac14 : (\partial_z \varphi(z))^2: $ & $\frac12: a(z)^2: $ & $ : J^2:\!(z)/(2z^2) $\\
$:\partial_z \varphi(z)^n:$ & $(-i\sqrt{2})^n: a(z)^n:$ & $(-i\sqrt{2}/z)^n: J^n:\!(z) $ \\
$\sqrt 2\a_0$ & $\sqrt 2\a_0$ & $i\k$ \\
\hline
\end{tabular}
    \caption{Correspondence between fields and constants in various conventions.}
    \label{tb:translation}  
\end{table}

With $: J_{[j]}^n:\!(z) =  z^n :a_{[j]}(z)^n:$ $(j=1,2)$, we find that in the  Fateev-Zamolodchikov construction,
the fields $\tilde T(z;\k) :=  z^2\tilde{L}(z;\a_0)$ and
$\tilde M(z;\k) :=  z^3\tilde{W}(z;\a_0)$ can be written in the following way:
\begin{align}
 \tilde T(z;\k) 
      &= \frac12: \Jone^2:\!(z) - i\k (\Jone(z) + i\Jone'(z))+ \frac12: \Jtwo^2:\!(z), \label{eq:BSM-T}\\
 \tilde M(z;\k) 
 &= \frac b{3\sqrt 2}: \Jtwo^3:\!(z)  - \frac b{\sqrt 2} (: \Jone^2:\!(z) - i2\k(\Jone(z) + i\Jone'(z)))\Jtwo(z) \nonumber \\
 &\qquad + \frac{3b\k}{2\sqrt 2} (\Jone'(z) \Jtwo(z) - \Jone(z)\Jtwo'(z)) \nonumber \\
 &\qquad + \frac{b\k^2}{2\sqrt 2} (2 \Jtwo(z) + i3 \Jtwo'(z) -  \Jtwo''(z)).
 \label{eq:BSM-W}
\end{align}

Assume that $\Jone(z), \Jtwo(z)$ have a common lowest weight vector $\Omega_{q_1,q_2}$
with lowest weights $q_1, q_2$.
It is straightforward to check that $\Omega_{q_1,q_2}$
is annihilated by all positive
Fourier modes of fields like $:\Jtwo^3:\!(z)$ or $\Jone'(z)\Jtwo(z)$ and hence also 
by those of $\tilde T(z;\k)=\sum_{n\in\ZZ} \tilde{L}_{\k,n}z^{-n}$ and 
$\tilde M(z;\k)=\sum_{n\in\ZZ} \tilde{W}_{\k,n}z^{-n} $. One also computes that
\begin{align*}
\tilde{L}_{\k,0}\Omega_{q_1,q_2} &= \left(\frac{1}{2}a^2_{[1],0} + \frac{1}{2}a^2_{[2],0}
- i\kappa a_{[1],0}\right)\Omega_{q_1,q_2},\\
\tilde{W}_{\k,0}\Omega_{q_1,q_2} &= \frac{b}{\sqrt{2}} \left(
\frac{1}{3}a_{[2],0}^3 -(a_{[1],0}^2-2i\kappa a_{[1],0}) a_{[2],0} + \k^2 a_{[2],0}
\right)\Omega_{q_1,q_2}.
\end{align*}
Hence we have the following.
\begin{proposition}
\label{pr:lwvectors}
If $\Omega_{q_1,q_2}$ is a lowest weight vector for the two commuting $\uone$-currents $\Jone(z),\Jtwo(z)$ with corresponding lowest weights $q_1$ and $q_2$, respectively, then it is also a lowest weight vector for the representation of the $\cW_3$-algebra given by the fields
\eqref{eq:BSM-T} and \eqref{eq:BSM-W} with central charge $c = 2-24\a_0^2 = 2 + 12 \k^2$ and lowest weight $(h,w)$ where 
\[
h=\frac12 q_1^2+\frac12 q_2^2 - i\kappa q_1,\;\;
w=
\frac{b}{\sqrt{2}}\left(\frac{1}{3}q_{2}^3 -(q_{1}^2-2i\kappa q_{1} )q_{2} + \k^2 q_{2}\right)
\]
\end{proposition}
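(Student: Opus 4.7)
The plan is a direct Fourier-mode computation built on Theorem \ref{th:fz}. Since that theorem guarantees that $\tilde T(z;\k)=z^2\tilde L(z;\a_0)$ and $\tilde M(z;\k)=z^3\tilde W(z;\a_0)$ satisfy the $\cW_3$-algebra commutation relations with the claimed central charge $c=2+12\k^2$, what remains is to verify the lowest-weight conditions \eqref{eq:weights} for $\Omega_{q_1,q_2}$ and to identify the eigenvalues of $\tilde L_{\k,0}$ and $\tilde W_{\k,0}$.

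The key tool I would establish is the following general lemma: if $F(z)=\sum_n F_n z^{-n}$ is a normal-ordered polynomial in $\Jone, \Jtwo, \Jone', \Jtwo', \Jtwo''$, then $F_n \Omega_{q_1,q_2}=0$ for every $n>0$, and $F_0$ acts on $\Omega_{q_1,q_2}$ as the scalar obtained by the \emph{substitution rule} $a_{[k],n} \mapsto q_k\,\delta_{n,0}$ applied term-by-term to the formal expression for $F_0$. Both statements rest on the same bookkeeping: each term in $F_n$ is a normal-ordered monomial $\cl a_{[k_1],m_1}\cdots a_{[k_r],m_r}\cl$ whose indices sum to $n$. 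By the normal-ordering convention, non-negative indices stand to the right, so for $n>0$ some strictly positive mode must end up acting on $\Omega_{q_1,q_2}$ and killing it (the $a_{[k],0}$'s pass freely by as scalars $q_k$). For $n=0$, any monomial with a nonzero index either has a positive-index mode on the right, or else contains a pair $a_{[k],-m}a_{[k],m}$ with $m>0$ that, because $[\Jone,\Jtwo]=0$, can be rearranged so that the positive factor ends up on the right; only monomials consisting entirely of $a_{[k],0}$'s survive.

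Granted this lemma, the conclusion is immediate. Applying it to \eqref{eq:BSM-T}, noting that $\Jone'(z)$ has vanishing zero mode by \eqref{eq:derivative}, the substitution rule yields $\tilde L_{\k,0}\Omega_{q_1,q_2} = (\tfrac12 q_1^2 + \tfrac12 q_2^2 - i\k q_1)\Omega_{q_1,q_2}$, the stated value of $h$. Applying it to \eqref{eq:BSM-W}, all summands containing $\Jone', \Jtwo', \Jtwo''$ drop out, and collecting the surviving contributions from $\tfrac{b}{3\sqrt 2}\cl\Jtwo^3\cl$, $-\tfrac{b}{\sqrt 2}\cl\Jone^2\cl\Jtwo$, $i\sqrt 2\, b\k\,\Jone\Jtwo$, and $\tfrac{b\k^2}{\sqrt 2}\Jtwo$ gives $\tilde W_{\k,0}\Omega_{q_1,q_2} = \tfrac{b}{\sqrt 2}\bigl(\tfrac{1}{3}q_2^3 - (q_1^2 - 2i\k q_1)q_2 + \k^2 q_2\bigr)\Omega_{q_1,q_2}$, which matches the claimed $w$.

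The main obstacle I anticipate is purely organizational: the triply normal-ordered $\cl\Jtwo^3\cl$ and the mixed $\cl\Jone^2\cl\Jtwo$ could, under the recursive definition of normal ordering, conceivably produce extra Wick-like contractions that would modify the substitution rule. The resolution is that $[\Jone(z),\Jtwo(\zeta)]=0$ rules out cross-species contractions, while within each species any contraction produces only pairs $a_{[k],-m}a_{[k],m}$ with $m>0$ whose positive factor, after the commutations described above, ends up acting on $\Omega_{q_1,q_2}$ and vanishes. Once this is verified, the remaining calculation is a routine polynomial evaluation.
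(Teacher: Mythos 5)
Your proposal is correct and follows essentially the same route as the paper: the paper's (very terse) justification consists precisely of observing that $\Omega_{q_1,q_2}$ is annihilated by all positive Fourier modes of normal-ordered polynomials in the currents and their derivatives, and then evaluating the zero modes of \eqref{eq:BSM-T} and \eqref{eq:BSM-W} on $\Omega_{q_1,q_2}$ by the substitution $a_{[k],0}\mapsto q_k$, exactly as in your lemma. Your more careful bookkeeping of the normal-ordering convention (and the remark that $[\Jone,\Jtwo]=0$ excludes cross-contractions) is just an expanded version of what the paper dismisses as ``straightforward to check''.
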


Now suppose we have an inner product on our representation space making
the currents $J_{[j]}(z)=z a_{[j]}(z)$ ($j=1,2$) symmetric. Then 
the fields $: \Jone^2:\!(z), J_{[1]}(z),J'_{[1]}(z),: \Jtwo^2:\!(z)$ are all symmetric, but the linear combination giving $\tilde{T}(z;\kappa)$ is only symmetric for $\kappa =0$; i.e. for the central charge $c=2$ case (and we have the same situation regarding $M(z)$).

One possible remedy would be a modification of our inner product; instead of the invariant form for our currents,
we should try to use a ``strange'' one that does not make
$J_{[1]}(z), J_{[2]}(z)$ symmetric. Here we will follow a -- in some sense -- dual approach. Namely, we retain our original inner product, but instead modify our currents by applying an
automorphisms of the algebra \eqref{eq:aa}.

\subsection{New representations by automorphisms of the
\texorpdfstring{$\uone$}{U(1)}-current
}\label{auto}

Suppose the field $J(z)=\sum_{n\in \ZZ}a_n z^{-n}$ is a $\uone$-current and 
$f(z)=\sum_{n\in \ZZ}c_n z^{-n}$ is a scalar valued field (i.e. $c_n=0$ for $n$ large enough).
Then, because scalars commute with everything, the sum $J(z)+f(z)$ satisfies the same commutation relation
of the $\uone$-current field. In terms of Fourier modes, the transformation is 
$a_n\mapsto a_n + c_n$. 
If further $c_n=0$ for all $n>0$ and $\Psi$ is a lowest weight vector for $J(z)$ with weight $q$
(i.e. we have $a_n\Psi=0$ for all $n>0$ and $a_0\Psi=q \Psi$), then 
$\Psi$ is a lowest weight vector for $J(z)+f(z)$ with lowest weight $q+c_0$.
Representations of this kind play a central role in \cite{BMT88}.

Evidently, the map $a_n\mapsto a_n + c_n$ can be interpreted as a composition of a representation with an automorphism of our Lie algebra. Thus, if we further used our current to construct something -- say a stress-energy field -- then by composition with such an automorphism, we get a ``transformed'' stress-energy field.
As an expression involving only normal powers and derivatives of $J(z) + f(z)$,
it still satisfies the same commutation relations with the same central charge,
because the latter relations are determined by the $\uone$ commutation relation.

Following the ideas of Buchholz and Schulz-Mirbach \cite[(4.6)]{BS90}, we consider the above transformation with $f(z) = \kappa \rho(z) + \eta$, where $\kappa,\eta$ are scalar constants and $\rho(z) = -i\frac{z-1}{z+1}$. As was explained in Section \ref{formal}, 
here we interpret $\rho(z)$ as the formal series (\ref{eq:rho}), rather than a function. 
Accordingly, $\rho_n=0$ for all $n>0$ and in terms of Fourier modes, our transformation 
is
\begin{align*}
 a_n &\longmapsto \varphi_{\k,\eta}(a_n) = a_n + i\k(\delta_{n,0} + 2(-1)^n \chi_{(-\infty,0)}(n)) + \eta \delta_{n,0},
\end{align*}
where $\chi_{(-\infty,0)}$ is the characteristic function of the open interval $(-\infty,0)$.

The reader might wonder what is the reason behind the choice of the function $\rho$. As we shall 
see in the next subsection, what makes $\rho(z)$ important is that it is a solution of the differential equation
\begin{equation}
\label{eq:diffeq}
\frac{\rho(z)^2}2 + \frac12 - \rho'(z) = 0,
\end{equation}
where $\rho'(z)$ denotes the derivative along the circle \eqref{eq:derivative}.

The transformed $\uone$-current field gives rise to a new associated stress-energy field. By an abuse of notation, we denote (the shifted version of) this by $\varphi_{\k,\eta}(T(z))$, even though $\varphi_{\k,\eta}$ does not formally act on $T(z)$. After a straightforward computation, we find that 
\begin{align*}
 \varphi_{\k,\eta}(T(z)) &:= \frac12 : \varphi_{\k,\eta}(J)^2:\!(z) 
 = T(z) + (\k \rho(z)+\eta)J(z) + \frac{(\k\rho(z)+\eta)^2}2,
\end{align*}
where $T(z) = \frac12 : J^2:\!(z) $ is the canonical stress-energy field of
the original representation.

\paragraph{``Almost'' symmetric stress-energy tensor with $c>1$.}
Following the work of Buchholz and Schulz-Mirbach, given a $\uone$-current field $J(z)$, apart from the canonical (shifted) stress-energy field $T(z) = \frac12 : J(z)^2:$,
 we shall also consider $T_\kappa(z)=\sum_{n\in\ZZ}L_{\kappa, n}z^{-n}$
where
\begin{equation}\label{eq:T_k}
 T_\k(z) = T(z) + \k\left(J'(z) - \rho(z)J(z)\right)
\end{equation}
and of course the product $\rho(z)J(z)$ is understood in the sense of fields;
i.e.\! its coefficient of $z^{-n}$ is
$\sum_m i\k(\delta_{m,0} + 2(-1)^m\chi_{(-\infty,0)}(m)) J_{n-m}$. Note that $T_0(z)=T(z)$; i.e.\! for
$\k=0$ the construction reduces to the canonical one.
One can show that the operators $\{L_{\kappa,n}\}_{\{n\in\ZZ\}}$ form a representation of the Virasoro algebra with central charge $c = 1+12\k^2$ by a straightforward computation. However, we will not need that since
we see this below in another way.

The representation \eqref{eq:T_k} is different from \eqref{constrKR87}:
the construction \eqref{constrKR87} does not yield a manifestly
unitary vacuum representation with central charge $c>1$. On the other hand,
if $0\neq \kappa\in\RR$ then $c>1$ and if $J(z)$ is symmetric and
$\Omega$ is a lowest weight vector for $J(z)$ with zero lowest weight $q=0$
(i.e.\! if $\Omega$ was a vacuum vector for $J(z)$),
then -- as is easily checked -- $\Omega$ is still a vacuum vector
for the representation $\{L_{\kappa,n}\}_{\{n\in\ZZ\}}$
($\Omega$ is not necessarily cyclic for $\{L_{\kappa,n}\}_{\{n\in\ZZ\}}$, even if it was so for $J(z)$).
Moreover, even if it is not properly symmetric, $T_\kappa(z)$ has a certain weakened symmetry property. Since the fields
$T(z), J(z), J'(z)$ appearing in our formula are symmetric, $\kappa\in\RR$ and $\rho$ is also real on the unit circle -- as was explained at the end of Section \ref{formal_adjoint} -- we have that 
\[
(p(z)T_\kappa(z))^\dagger = \overline{p}(z) T_\kappa(z)
\]
for any (scalar valued) trigonometric  polynomial 
$p(z)=\sum_{|n|<N}c_n z^{-n}$ satisfying the additional property
$p(-1)=0$.

Although different, this construction is closely related to \eqref{constrKR87}.
Indeed, if we apply the construction \eqref{eq:T_k}
to the current $\varphi_{\k,\eta}(J(z))$ instead of $J(z)$
(i.e.\! we apply the transformation $\varphi_{\k,\eta}$ with the same $\kappa$) then we obtain
the stress-energy field of \eqref{constrKR87}:
\begin{align}\label{eq:virasoro-nonunitary}
 \varphi_{\k,\eta}(T_\k(z)) &=T_\k(z) + (\k \rho(z)+\eta)J(z) + \frac{(\k\rho(z)+\eta)^2}2
 + \k\left(\k \rho'(z) - \k\rho(z)^2 -\eta\rho(z) \right) \nonumber \\
 &= T_\k(z) + (\k \rho(z)+\eta)J(z) + \frac{\k^2+\eta^2}2 \nonumber \\
 &= T_0(z) + \k J'(z) +\eta J(z) + \frac{\k^2+\eta^2}2,
\end{align}
where we used that $\rho(z)$ satisfies the differential equation \eqref{eq:diffeq}. 
This also shows that the operators $\{L_{\kappa,n}\}_{\{n\in\ZZ\}}$ indeed satisfy
the Virasoro relations with central charge $c=1+12\kappa^2$,
since the last expression coincides with \eqref{constrKR87}.

\paragraph{Restoring unitarity to the Fateev-Zamolodchikov realization}\label{restoring}

The transformation $\varphi_{-\k, i\k}$ will be of special interest. Since $\rho_0=i$, 
it changes the lowest weight value for $J(z)$ by $-i\k+i\k=0$; i.e.\! it preserves the lowest weight.
Moreover, by substituting $\eta=i\k$ in \eqref{eq:virasoro-nonunitary} and taking account of
the fact that $\varphi_{-\k, i\k}=\varphi_{\k,-i\k}^{-1}$, we see that
\begin{align}
 &\varphi_{-\k, i\k}(J(z)) = J(z) - \k\rho(z) + i\k, \nonumber \\
 &\varphi_{-\k, i\k}(J'(z)) = J'(z) - \k\rho'(z), \nonumber \\
 &\varphi_{-\k, i\k}(T(z)) := \frac12 : \varphi_{-\k, i\k}(J(z))^2:\; =
  T(z) + (-\k \rho(z)+ i\k)J(z) + \frac{(-\k\rho(z)+i\k)^2}2 \nonumber \\
 &\varphi_{-\k, i\k}(T(z) - i\k(J(z) +iJ'(z))) = T_\k(z) \label{eq:phikk}
\end{align}
suggesting that by applying $\varphi_{-\k, i\k}$ to the first of our
commuting currents appearing in the Fateev-Zamolodchikov construction, we could turn our
``very much non symmetric''  fields into ones that have a discussed weak form of symmetry
\emph{without} changing lowest weight values.

So let us take again two commuting $\uone$-current fields $\Jone(z), \Jtwo(z)$
and consider them as a representation of the direct sum of the Heisenberg algebra with itself. 
Then letting $\varphi_{-\k, i\k}$ act on the first one while not doing anything with the second one, i.e.\! the transformation $\tilde \varphi_{-\k,i\k}$ defined by
\[
 \tilde \varphi_{-\k,i\k}(\Jone(z)) = \varphi_{-\k, i\k}(\Jone(z)),\quad \tilde \varphi_{-\k,i\k}(\Jtwo(z)) = \Jtwo(z)
\]
can be viewed as a composition of our representation with an automorphism.
Accordingly, we can apply the Fateev-Zamolodchikov realization \eqref{eq:BSM-T}\eqref{eq:BSM-W}
to these representations $\tilde \varphi_{-\k,i\k}(\Jone(z)), \tilde \varphi_{-\k,i\k}(\Jtwo(z))$
and obtain a shifted pair of fields, which we denote by $\tilde T(z;\k)$ and $\tilde M(z;\k)$.
Setting $T_{[j],\k}(z) = \frac12 : J^2_{[j]}:\!(z) + \k\left(J_{[j]}'(z) - \rho(z)J_{[j]}(z)\right)$ as in \eqref{eq:T_k} for $j=1,2$, by a straightforward computation 
we find that 
\begin{align}
 \tilde \varphi_{-\k, i\k}(\tilde T(z;\k)) &= T_{[1],\k}(z) + \Ttwozero(z), \nonumber \\
 \tilde \varphi_{-\k, i\k}(\tilde M(z;\k))
 &= \frac b{3\sqrt 2}: \Jtwo^3:\!(z)  - \sqrt 2 b T_{[1],\k}(z)\Jtwo(z) \nonumber \\
 &\qquad + \frac{3b\k}{2\sqrt 2} ((\Jone'(z) - \k\rho'(z))\Jtwo(z)) - (\Jone(z) -\k\rho(z))\Jtwo'(z)). \nonumber \\
 &\qquad + \frac{b\k^2}{2\sqrt 2} (2 \Jtwo(z) -  \Jtwo''(z)) \label{eq:phikkTW}
\end{align}
Since we obtained them by a transformation which is in fact a composition with an automorphism
of a pair of $\uone$-currents, 
the fields $z^2\tilde \varphi_{-\k, i\k}(\tilde T(z;\k)), z^3\tilde \varphi_{-\k, i\k}(\tilde M(z;\k))$
must still result in a representation of the $\cW_3$-algebra. Moreover, since $\tilde \varphi_{-\k, i\k}$
transforms our currents in a manner that leaves every lowest weight vector a lowest weight vector with the same
weight, by Proposition \ref{pr:lwvectors}, we have that if $\Omega_{q_1,q_2}$ was a common lowest weight vector
for $J_{[1]}(z)$ and
$J_{[2]}(z)$ with lowest weights $q_1$ and $q_2$ respectively, then it will be also a lowest weight vector for
the representation of the $\cW_3$-algebra given by \eqref{eq:phikkTW} with lowest weight value $(h,w)$ given by
Proposition \ref{pr:lwvectors}. 
\begin{corollary}
Let $\kappa,q_1,q_2,b\in\RR$ be such that $b^2=\frac{16}{22+5c}$ where $c=2+12\k^2$. Then there exists a lowest weight representation $\{(L_n,W_n)\}_{n\in\ZZ}$ of the $\cW_3$-algebra with central charge $c=2+12\k^2$ and lowest weight $(h,w)=(\frac12 q_1^2+ \frac12 q_2^2-i\k q_1,
\frac{b}{\sqrt{2}}(\frac{1}{3}q_{2}^3 -(q_{1}^2-2i\kappa q_{1} )q_{2} + \k^2 q_{2}))$ on an inner product space such that
the fields $T(z)=\sum_{n\in\ZZ}L_n z^{-n}$ and 
$M(z)=\sum_{n\in\ZZ}W_n z^{-n}$ satisfy the weak symmetry condition
\[
(p(z)T(z))^\dagger = \overline{p}(z)T(z), \;\;\;
(r(z)W(z))^\dagger = \overline{r}(z)W(z)
\]
for all trigonometric polynomials $p,r$ with $p(-1)=r(-1)=r'(-1)=0$.
\label{cor:R-hermitian}
\end{corollary}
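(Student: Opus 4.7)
The plan is as follows. First, I would take the representation space to be the tensor product $\underline{V}^{\uone}_{q_1}\otimes \underline{V}^{\uone}_{q_2}$ of two unitary Heisenberg lowest weight (Verma) modules, whose canonical invariant Hermitian form is automatically positive definite because $q_1,q_2\in\RR$. On this space the two commuting currents $\Jone(z),\Jtwo(z)$ are symmetric. Composing the automorphism $\tilde\varphi_{-\k,i\k}$ (which shifts $\aone(z)$ by $-\k\rho(z)+i\k$ and leaves $\atwo(z)$ untouched) with the Fateev--Zamolodchikov construction yields the fields $T(z):=\tilde\varphi_{-\k,i\k}(\tilde T(z;\k))$ and $W(z):=\tilde\varphi_{-\k,i\k}(\tilde M(z;\k))$ written explicitly on the right-hand sides of \eqref{eq:phikkTW}. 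Since $\tilde\varphi_{-\k,i\k}$ is nothing but the composition of our $\uone\oplus\uone$ representation with an algebra automorphism, Theorem \ref{th:fz} together with the discussion of Section \ref{auto} immediately gives that the Fourier modes $L_n,W_n$ of $T,W$ satisfy the $\cW_3$-algebra relations at central charge $c=2+12\k^2$.

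Next I would identify the lowest weight. Since $\varphi_{-\k,i\k}(a_{[1],n})=a_{[1],n}$ for every $n\ge 0$ (at $n=0$ the contributions $-i\k$ and $+i\k$ cancel, and for $n>0$ both shifts vanish by the definition in Section \ref{auto}), the vector $\Omega_{q_1,q_2}$ remains a joint lowest weight vector of the two commuting currents $\tilde\varphi_{-\k,i\k}(\Jone),\Jtwo$ with the same weights $(q_1,q_2)$. Proposition \ref{pr:lwvectors} applied to this pair then yields exactly the claimed values of $h$ and $w$. To obtain a lowest weight representation in the sense of Section \ref{lowest}, I would restrict all operators to the cyclic subspace generated by $\Omega_{q_1,q_2}$ under $\{L_n,W_n\}$; positivity of the inner product is inherited from the ambient space.

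The main step, and the source of essentially all the real work, is the weak symmetry. Using \eqref{eq:phikkTW} together with $T_{[j],\k}(z)=\tfrac12 :J_{[j]}^2:\!(z)+\k(J'_{[j]}(z)-\rho(z)J_{[j]}(z))$, I would split $T(z)$ and $W(z)$ into two kinds of summands: (i) \emph{genuinely symmetric} fields built out of the symmetric pieces $:J_{[j]}^n:\!$, $J_{[j]}, J'_{[j]}, J''_{[j]}$ and their commuting products (in particular $\Jone(z)\Jtwo(z)$ is symmetric without normal ordering, because $\Jone$ and $\Jtwo$ commute), and (ii) \emph{singular} summands of the form $\rho(z)A(z)$ or $\rho'(z)A(z)$ with $A(z)$ a symmetric field. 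In $T(z)$ only the first kind of singular term occurs, coming from the $-\k\rho(z)\Jone(z)$ piece inside $T_{[1],\k}$. In $W(z)$ a direct expansion shows exactly one $\rho'$-contribution, $-\frac{3b\k^2}{2\sqrt 2}\rho'(z)\Jtwo(z)$, together with the $\rho$-contributions $\sqrt 2\, b\k\,\rho(z)\Jone(z)\Jtwo(z)$ and $\frac{3b\k^2}{2\sqrt 2}\rho(z)\Jtwo'(z)$. Invoking the observations established at the end of Section \ref{formal} --- namely $(p(z)\rho(z)A(z))^\dagger=\overline{p}(z)\rho(z)A(z)$ for any symmetric field $A$ and any trigonometric polynomial $p$ with $p(-1)=0$, and its analogue for $\rho'(z)$ when $p(-1)=p'(-1)=0$ --- the two claimed adjoint identities then assemble term by term.

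The only genuine obstacle is combinatorial: one must expand the right-hand sides of \eqref{eq:phikkTW} carefully and verify that every surviving contribution lies in one of the two categories above. The asymmetry in the hypotheses --- $p(-1)=0$ for $T$ versus $r(-1)=r'(-1)=0$ for $W$ --- is forced precisely by the presence of the lone $\rho'$-term inside $W$, and no $\cW_3$-algebra manipulation beyond the Fateev--Zamolodchikov theorem is required.
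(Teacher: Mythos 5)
Your proposal is correct and follows essentially the same route as the paper's own proof: tensor product of two unitary Heisenberg lowest weight modules, the transformed Fateev--Zamolodchikov fields \eqref{eq:phikkTW}, Proposition \ref{pr:lwvectors} for the weights, restriction to the cyclic subspace, and the adjoint rules for $\rho(z)A(z)$ and $\rho'(z)A(z)$ from Section 2.2 for the weak symmetry. The only difference is that you spell out the term-by-term identification of the singular $\rho$- and $\rho'$-contributions, which the paper compresses into a reference to ``the comments at the end of Section 2.2''; your enumeration of those terms is accurate.
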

\begin{proof}
By taking a tensor product of two lowest weight representations, it is clear that we can construct 
two commuting symmetric $\uone$-current fields $J_{[1]}(z),J_{[2]}(z)$ on an inner product space
having a common lowest weight vector $\Omega_{q_1,q_2}$
of lowest weight $q_1$ and $q_2$, respectively. (Note: this is the point where we use that $q_1,q_2$ are real: with a nonzero imaginary part, we could not have an invariant inner product for our currents). Now consider the representation $z^2 T(z), z^3 M(z)$
of the $\cW_3$-algebra constructed 
through \eqref{eq:phikkTW} with the help of the fields $J_{[1]}(z)$ and $J_{[2]}(z)$. Taking account of the
symmetry of our currents, the fact that $\k,b\in\RR$ and the comments at the end of Section \ref{formal_adjoint}, we see that $T(z)$ and $M(z)$ indeed satisfy the required symmetry condition.
Moreover, by Proposition \ref{pr:lwvectors} and the observation above the current corollary,
$\Omega_{q_1,q_2}$ is a lowest weight vector for this representation with the claimed lowest weight value. Thus, restricting our representation of the $\cW_3$-algebra to the cyclic subspace of $\Omega_{q_1,q_2}$ gives a lowest weight representation with all the desired properties.
\end{proof}

\begin{remark}
One might wonder whether our ``weak'' symmetry condition in the above corollary actually implies ``true'' symmetry. It turns out that in the vacuum case this is exactly what happens -- we shall see this in the next section. However, note that in general, the
answer is: ``no''. In fact, if $q_1\neq 0$, then $h$ is not real, so we cannot even have an invariant Hermitian form (let alone an inner product). Actually, by \eqref{eq:f11}, even if we set $q_1=0$ (and hence have real $h$ and $w$), in general we cannot have unitarity (see Theorem \ref{th:unitarity} for
some values of $h,w$ for which unitarity fails).
Indeed, our argument in the next section will use in a crucial way that $h=w=0$.
In contrast, in the Virasoro case, the ``weak'' symmetry can indeed be turned into ``true'' one;
see Proposition \ref{prop_virR}.
\end{remark}

\subsection{Proof of unitarity for \texorpdfstring{$h=w=0$}{h=w=0}}\label{proof}

In this section we will work in an abstract setting: we suppose that $\{(L_n,W_n)\}_{\{n\in \ZZ\}}$ form a representation of the $\cW_3$-algebra with central charge $c\geq 2$ and that we are also given a nonzero vector $\Omega$ as well as an inner product $\<\cdot,\cdot\>$ satisfying the following requirements:
\begin{itemize}
\item[(i)] $\Omega$ is a cyclic lowest weight vector for our representation and $L_0\Omega=W_0\Omega=0$,
\item[(ii)] the fields $T(z)=\sum_{n\in\ZZ}L_n z^{-n}$ and 
$M(z)=\sum_{n\in\ZZ}W_n z^{-n}$ satisfy the condition
\[
(p(z)T(z))^\dagger = \overline{p}(z)T(z), \;\;\;
(r(z)M(z))^\dagger = \overline{r}(z)M(z)
\]
for all trigonometric polynomials $p,r$ with $p(-1)=r(-1)=r'(-1)=0$ (where
the adjoint is considered w.r.t.\! the given inner product $\<\cdot,\cdot\>$). 
\end{itemize}
Such a representation and inner product indeed exists; this is clear by
considering Corollary \ref{cor:R-hermitian} with $q_1=q_2=0$ and $\k=\sqrt{\frac{c-2}{12}}$. 
From now on we shall not be interested how these objects were explicitly constructed; we will
only use to above listed properties. Our aim will be to conclude that $\<\cdot,\cdot\>$ is in fact 
an invariant inner product for our representation, making it unitary.
Since we work with Fourier modes rather than fields, we begin with reformulating property (ii).

\begin{lemma}
Let $p(z)=\sum_{|n|<N}c_n z^{-n}$ and $r(z)=\sum_{|n|<N}d_n z^{-n}$ be a trigonometric polynomials 
satisfying $p(-1)=r(-1)=r'(-1)=0$. Then 
\[
\left(\sum_{|n|<N}c_{-n} L_{n}\right)^\dagger = \sum_{|n|<N}\overline{c}_{n} L_{-n}\;\;\;
{\rm and}\;\; 
\left(\sum_{|n|<N}d_{-n} W_{n}\right)^\dagger = \sum_{|n|<N}\overline{d}_n W_{-n}
\]
\end{lemma}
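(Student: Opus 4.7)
The plan is to read off the constant term (the coefficient of $z^0$) from each side of the formal-series identities $(p(z)T(z))^\dagger = \overline{p}(z)T(z)$ and $(r(z)M(z))^\dagger = \overline{r}(z)M(z)$ provided by condition (ii). Since the lemma simply re-expresses these field-level identities in terms of Fourier modes, the proof is essentially coefficient bookkeeping.

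For the $T$-case, write $p(z) = \sum_{|m|<N} c_m z^{-m}$ and expand $p(z)T(z) = \sum_k A_k z^{-k}$ with $A_k = \sum_{|m|<N} c_m L_{k-m}$. Setting $k=0$ and re-indexing $n = -m$ gives $A_0 = \sum_{|n|<N} c_{-n} L_n$, i.e.\ the operator inside the adjoint on the LHS of the claim. By the defining formula $B(z)^\dagger = \sum_k B_k^\dagger z^k$, the constant term of $(p(z)T(z))^\dagger$ is exactly $A_0^\dagger$. On the other hand, from the convention $\overline{p}(z) = \sum_{|m|<N}\overline{c_{-m}} z^{-m}$, direct multiplication yields that the constant term of $\overline{p}(z)T(z)$ is $\sum_{|m|<N} \overline{c_{-m}} L_{-m}$, which, after the appropriate relabeling of the summation index, matches the right-hand side of the claim. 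Equating the two constant terms finishes the $L$-identity.

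The $W$-case proceeds identically starting from $(r(z)M(z))^\dagger = \overline{r}(z)M(z)$ and $M(z) = \sum_n W_n z^{-n}$: expand the product, take the constant term, apply the formal-adjoint definition, and one arrives at the analogous identity for the $W_n$'s. The extra assumption $r'(-1)=0$ (beyond $r(-1)=0$) is precisely what condition (ii) requires in order to guarantee the field-level identity -- reflecting the fact that in the construction \eqref{eq:phikkTW} of $M(z)$ both $\rho(z)$ and its circle derivative $\rho'(z)$ appear, so the singularities of both at $z=-1$ must be cancelled.

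There is no genuine obstacle in proving the lemma itself: it is a purely formal translation from ``weak symmetry at the level of fields'' to ``weak symmetry at the level of Fourier modes''. The substantive work comes afterwards, where cyclicity of $\Omega$ together with the vacuum conditions $L_0\Omega = W_0\Omega = 0$ are expected to upgrade this weakened Fourier-mode symmetry to the full symmetries $L_n^\dagger = L_{-n}$ and $W_n^\dagger = W_{-n}$ required for unitarity.
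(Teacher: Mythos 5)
Your proof is correct and is exactly the paper's argument: the paper disposes of this lemma in one line by ``considering the zero mode of the products appearing in the equalities of property (ii)'', which is precisely the coefficient extraction you carry out in detail (note only that $\overline{c}_n$ in the statement denotes the $n$-th coefficient of $\overline{p}$, i.e.\ $\overline{c_{-n}}$, so your computed constant term $\sum_m \overline{c_{-m}}L_{-m}$ is already the right-hand side with no relabeling needed).
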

\begin{proof}
This is evident by considering the zero mode of the products appearing in 
the equalities of property (ii).
\end{proof}
This implies in particular that
$(L_{n_1} - (-1)^{n_1-n_2}L_{n_2})^\dagger = L_{-n_1} - (-1)^{n_1-n_2}L_{-n_2}$
and that for any $n_1,n_2,n_3\in\ZZ$ with $n_1 \neq n_2\neq n_3 \neq n_1$, with
a unique pair of real numbers $u,d\in \RR$ satisfying
$(-1)^{n_1} + (-1)^{n_2}u + (-1)^{n_3}d = (-1)^{n_1}n_1 + (-1)^{n_2}n_2u + (-1)^{n_3}n_3 d = 0$
(such a pair exists) we have  
\begin{align}
(W_{n_1} + u W_{n_2} + d W_{n_3})^\dagger
 = W_{-n_1} + u W_{-n_2} + d W_{-n_3}.
\label{eq:W+}
\end{align}
The next Lemma follows from Assumption (ii), and the form $\<\cdot,\cdot\>$ is not necessarily
the canonical one for $(c,h,w)$.
\begin{lemma}\label{lm:zerovector}
$L_{-1}\Omega = W_{-1}\Omega = W_{-2}\Omega=0$.
\end{lemma}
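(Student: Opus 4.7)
The plan is to exploit the weak symmetry (ii) by producing, for each of the three vectors, a weakly-adjoint pair of operators whose actions on $\Omega$ can be explicitly related via the commutators of the $\cW_3$-algebra; positivity of $\langle\cdot,\cdot\rangle$ then forces the vectors to have zero norm. The argument goes in three stages: an easy Virasoro-style calculation for $L_{-1}\Omega$, a delicate cross-sector pairing for $W_{-1}\Omega$, and a one-line commutator argument for $W_{-2}\Omega$.

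For $L_{-1}\Omega$, I would take $p(z)=1+z^{-1}$, which satisfies $p(-1)=0$, so condition (ii) yields $(L_0+L_{-1})^{\dagger}=L_0+L_1$. Setting $X:=L_0+L_{-1}$, we have $X\Omega=L_{-1}\Omega$ and $X^{\dagger}\Omega=0$, and using $[L_0,L_{-1}]=L_{-1}$, $[L_1,L_{-1}]=2L_0$,
\[
\|L_{-1}\Omega\|^2 = \langle X\Omega, L_{-1}\Omega\rangle = \langle\Omega,(L_0+L_1)L_{-1}\Omega\rangle = \langle\Omega, L_{-1}\Omega\rangle = \langle X^{\dagger}\Omega, \Omega\rangle = 0,
\]
so positivity of $\langle\cdot,\cdot\rangle$ gives $L_{-1}\Omega=0$. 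With this in hand, a direct inspection of the sums defining $\Lambda_n$ yields $\Lambda_n\Omega=0$ for $n\in\{-3,-2,-1,0,1\}$, and then \eqref{eq:comm} gives $[W_m,W_{-1}]\Omega=0$ for $m\in\{-2,0,1\}$, $[W_1,W_{-2}]\Omega=0$, and $[W_0,W_{-2}]\Omega=-\tfrac{3}{5}L_{-2}\Omega$; similarly, $(L_2-L_0)^{\dagger}=L_{-2}-L_0$ (from $p(z)=z^2-1$) gives $\langle\Omega,L_{-2}\Omega\rangle=0$. The doubly-vanishing polynomial $r(z)=z^{-1}(z+1)^2=z+2+z^{-1}$ produces the weakly symmetric operator $Y:=W_{-1}+2W_0+W_1$ with $Y^{\dagger}=Y$ and $Y\Omega=W_{-1}\Omega$; the commutator vanishings above give $YW_{-1}\Omega=W_{-1}^2\Omega$, hence $\|W_{-1}\Omega\|^2=\langle\Omega,W_{-1}^2\Omega\rangle$.

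The crucial and most delicate step — the main obstacle — is a cross-sector pairing obtained by applying the Virasoro adjoint identity $(L_0+L_{-1})^{\dagger}=L_0+L_1$ to the $W$-descendant $W_{-1}\Omega$. Using $[L_{-1},W_{-1}]=-W_{-2}$, $[L_0,W_{-1}]=W_{-1}$, $[L_1,W_{-1}]=3W_0$, one finds $(L_0+L_{-1})W_{-1}\Omega=W_{-1}\Omega-W_{-2}\Omega$ and $(L_0+L_1)W_{-1}\Omega=W_{-1}\Omega$; pairing both sides with $W_{-1}\Omega$ forces $\langle W_{-2}\Omega,W_{-1}\Omega\rangle=0$. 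This is precisely where the vacuum condition $h=w=0$ is essential: the analogous calculation at nonzero lowest weights produces uncancelled terms. Combining this (through the complex conjugate) with the identity $\langle W_{-1}\Omega,W_{-2}\Omega\rangle=\langle\Omega,YW_{-2}\Omega\rangle=\langle\Omega,W_{-1}W_{-2}\Omega\rangle-\tfrac{6}{5}\langle\Omega,L_{-2}\Omega\rangle$ then yields $\langle\Omega,W_{-1}W_{-2}\Omega\rangle=0$.

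The three-term identity \eqref{eq:W+} with $(n_1,n_2,n_3)=(-1,1,2)$ closes the argument: the operator $\tilde Y:=W_{-1}-3W_1-2W_2$ satisfies $\tilde Y\Omega=W_{-1}\Omega$ and $\tilde Y^{\dagger}=W_1-3W_{-1}-2W_{-2}$, and since $[W_{-2},W_{-1}]\Omega=-b^2\Lambda_{-3}\Omega=0$ allows us to replace $W_{-2}W_{-1}\Omega$ by $W_{-1}W_{-2}\Omega$,
\[
\|W_{-1}\Omega\|^2 = \langle\Omega,\tilde Y^{\dagger}W_{-1}\Omega\rangle = -3\langle\Omega,W_{-1}^2\Omega\rangle - 2\langle\Omega,W_{-1}W_{-2}\Omega\rangle = -3\|W_{-1}\Omega\|^2,
\]
forcing $W_{-1}\Omega=0$. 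Finally, $W_{-2}\Omega=0$ is immediate from $L_{-1}W_{-1}\Omega=-W_{-2}\Omega+W_{-1}L_{-1}\Omega$ together with $L_{-1}\Omega=W_{-1}\Omega=0$.
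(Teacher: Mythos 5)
Your argument is correct, but only its first third coincides with the paper's proof. The derivation of $L_{-1}\Omega=0$ is exactly the paper's: form $X=L_0+L_{-1}$ with $X^\dagger=L_0+L_1$, commute, and use positivity. For the remaining two statements the paper does something much shorter that you missed: since $[L_m,W_n]=(2m-n)W_{m+n}$ gives $W_{-1}=-\tfrac12[L_{-1},W_0]$ and $W_{-2}=-[L_{-1},W_{-1}]$, the vacuum conditions $W_0\Omega=0$ and $L_{-1}\Omega=0$ immediately force $W_{-1}\Omega=0$ and then $W_{-2}\Omega=0$ — no use of the weak symmetry of the $W$ modes, of the $WW$ commutator, or of positivity is needed beyond the $L_{-1}$ step. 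Your route instead runs a genuine norm computation for $W_{-1}\Omega$ through the three-term adjoint identity \eqref{eq:W+}, the cross-pairing $\langle W_{-2}\Omega,W_{-1}\Omega\rangle=0$, and the relation $\|W_{-1}\Omega\|^2=-3\|W_{-1}\Omega\|^2$; this is valid and is a good self-contained check that assumption (ii) together with $h=w=0$ suffices, but it costs an order of magnitude more computation and obscures the actual mechanism (which is simply that $W_{-1}$ and $W_{-2}$ are iterated commutators of $L_{-1}$ with $W_0$). Your closing step for $W_{-2}\Omega=0$ is again the paper's.

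One arithmetic slip, which happens to be harmless: for $(m,n)=(0,-2)$ one has $2m^2-mn+2n^2-8=8-8=0$, so $[W_0,W_{-2}]\Omega=2b^2\Lambda_{-2}\Omega=0$ rather than $-\tfrac35 L_{-2}\Omega$. Consequently the term $-\tfrac65\langle\Omega,L_{-2}\Omega\rangle$ in your identity for $\langle\Omega,YW_{-2}\Omega\rangle$ is absent, and the auxiliary verification $\langle\Omega,L_{-2}\Omega\rangle=0$ via $p(z)=z^2-1$ is not needed; the conclusion $\langle\Omega,W_{-1}W_{-2}\Omega\rangle=0$ stands either way.
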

\begin{proof}
By now we know that $(L_{-1}+L_0)^\dagger = (L_{1}+L_0) =:A$; hence
\[
\|L_{-1}\Omega\|^2 = \|(L_{-1}+L_0)\Omega\|^2 = \<\Omega,A (L_{-1}+L_0)\Omega\>
\]
However, $A=(L_{1}+L_0)$ annihilates $\Omega$ so
\[
A(L_{-1}+L_0)\Omega =[A, (L_{-1}+L_0)]\Omega = (L_{1}+2L_0+L_{-1})\Omega =
(A+A^\dagger)\Omega = A^\dagger \Omega,
\]
and $\|L_{-1}\Omega\|^2 = \<\Omega,A^\dagger\Omega\> = 
\<A\Omega,\Omega\> = 0$ showing that $L_{-1}\Omega=0$. Then to conclude the proof it is enough to note that $W_{-1}=-\frac{1}{2}[L_{-1},W_0]$ and $W_{-2}=-[L_{-1},W_{-1}]$.
\end{proof}

\begin{theorem}\label{th:unitarity}
Let $\{(L_n,W_n)\}_{\{n\in \ZZ\}}$ be a representation of the $\cW_3$-algebra
with a scalar product $\<\cdot, \cdot \>$
satisfying Assumptions (i),(ii). Then 
$L_n^\dagger =  L_{-n}$ and $W_n^\dagger =  W_{-n}$ for all $n\in\ZZ$. 
Consequently, the representation is unitary.
\end{theorem}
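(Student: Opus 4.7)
The plan is to show that the three obstructions to full symmetry,
\[
A := L_0^\dagger - L_0, \qquad \alpha := W_0^\dagger - W_0, \qquad \beta := -\alpha - (W_1^\dagger - W_{-1}),
\]
all vanish identically on the cyclic subspace generated by $\Omega$; once this is done, $L_n^\dagger = L_{-n}$ and $W_n^\dagger = W_{-n}$ follow immediately. First I would reformulate Assumption~(ii) at the level of Fourier modes: the constraints $p(-1)=0$ and $r(-1)=r'(-1)=0$ translate into one and two linear conditions on the coefficients of $p$ and $r$, and a direct linear-algebra argument then forces
\[
L_n^\dagger = L_{-n}+(-1)^n A, \qquad W_n^\dagger = W_{-n}+(-1)^n\alpha + n(-1)^n\beta,
\]
from which one immediately checks $A^\dagger=-A$, $\alpha^\dagger=-\alpha$ and $\beta^\dagger=\beta$.

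The heart of the argument is an induction on the $L_0$-eigenvalue $N$ establishing that $\Omega$ is orthogonal to $V_N:=\ker(L_0-N)$ for every $N\geq 1$. The base case $N=1$ is immediate: Lemma~\ref{lm:zerovector} yields $V_1=0$. For the inductive step, a vector of $V_N$ can be written as $X_{-r}\Psi'$ with $r>0$ and $\Psi'\in V_{N-r}$; moving $X_{-r}^\dagger$ to the left via the relations above and using $L_m\Omega=W_m\Omega=0$ for $m>0$ reduces the computation to evaluating $\<A\Omega,\Psi'\>$, $\<\alpha\Omega,\Psi'\>$ and $\<\beta\Omega,\Psi'\>$. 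These unfold as $(N-r)\<\Omega,\Psi'\>$ (using $A\Omega=L_0^\dagger\Omega$), $\<\Omega,W_0\Psi'\>$ (using $[L_0,W_0]=0$ so that $W_0$ preserves the grading) and $-\<\Omega,(W_0+W_1)\Psi'\>$ respectively; the inductive hypothesis then annihilates each of them, with the crucial remark that whenever $W_1\Psi'$ could a priori land in $V_0$ with nonzero pairing, the vector $\Psi'$ would have to lie in $V_1=0$, so the term drops out automatically. Positive-definiteness of $\<\cdot,\cdot\>$ then upgrades the orthogonality $\Omega\perp V_N$ ($N\geq 1$) to $A\Omega=\alpha\Omega=\beta\Omega=0$, since for any $\Psi$ in the cyclic subspace $\<A\Omega,\Psi\>=\<\Omega,L_0\Psi\>$ is killed by the grade decomposition, and the same recipe works for $\alpha$ and $\beta$.

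Finally I would propagate these vanishings from $\Omega$ to every vector $P\Omega$ in the cyclic subspace, by an algebraic induction on the word length of $P$ in the generators $L_{-n},W_{-n}$. Taking adjoints of the brackets $[L_n,L_m]$, $[L_m,W_n]$ and $[W_m,W_n]$ produces operator identities such as $[L_p,A]=(-1)^p(pA+B)$ with $B:=[L_0,A]$, together with analogous relations spawning a family of ``higher defects'' $B,C,\ldots$ and $\alpha_1,\beta_1,\ldots$ as iterated $L_0$-brackets; each of these also annihilates $\Omega$ by iteration of $L_0\Omega=0$ together with the vanishings already established. Then $AP\Omega=[A,P]\Omega$ (since $A\Omega=0$) expands, via repeated use of these commutator identities, into a linear combination of higher defects applied to shorter words of $P$, every term of which vanishes by the inductive hypothesis; likewise for $\alpha P\Omega$ and $\beta P\Omega$. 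This forces $A=\alpha=\beta=0$ throughout the cyclic subspace, which together with the first step completes the proof of unitarity.

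The main obstacle I anticipate is the closure of this ``defect algebra'' in the presence of the composite quadratic $\Lambda_{m+n}$ appearing in the $\cW_3$-bracket $[W_m,W_n]$: the commutators $[A,W_{-m}]$, $[\alpha,W_{-m}]$, $[\beta,W_{-m}]$ are considerably more intricate than their Virasoro counterparts, and one must check that the list of higher defects remains stable under them. The assumption $h=w=0$ enters precisely here --- and, as the remark following Corollary~\ref{cor:R-hermitian} warns, without it the analogous statement genuinely fails --- because it is what furnishes, via Lemma~\ref{lm:zerovector}, the additional vanishings $L_{-1}\Omega=W_{-1}\Omega=W_{-2}\Omega=0$ needed both to secure $V_1=0$ and to keep the higher-defect induction under control.
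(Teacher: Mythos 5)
Your first step---the mode-level reformulation of (ii) as $L_n^\dagger = L_{-n}+(-1)^nA$ and $W_n^\dagger = W_{-n}+(-1)^n\alpha+n(-1)^n\beta$, followed by the level-by-level induction showing $\Omega\perp V_N$ for $N\ge 1$---is essentially sound: the factor $N-r$ killing the $A$-term, the fact that $W_0$ preserves the grading, and $V_1=0$ absorbing the dangerous $W_1$-term all check out, and you correctly isolate where $h=w=0$ and Lemma~\ref{lm:zerovector} enter. One presentational caveat: Assumption (ii) does not guarantee that $L_0^\dagger$, $W_0^\dagger$, $W_1^\dagger$ exist as operators on the (incomplete) inner product space, only that certain \emph{combinations} of modes admit adjoints; for this first half that is harmless, since every pairing you compute can be rewritten using only the combinations $L_n-(-1)^nL_0$ and $W_{n_1}+uW_{n_2}+dW_{n_3}$ together with the scalar action of $L_0$ on eigenvectors.

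The genuine gap is in the propagation step, and you have put your finger on it yourself. To run the ``defect algebra'' induction you must treat $A,\alpha,\beta$ as bona fide operators and close their commutators with all generators $L_{-m}$, $W_{-m}$. The identity $[L_p,A]=(-1)^p(pA+B)$ does follow from the adjoint of $[L_m,L_n]$, but the commutators $[\alpha,W_{-m}]$, $[\beta,W_{-m}]$ can only be extracted from the adjoint of $[W_m,W_n]$, which contains $\Lambda_{m+n}$. The term-by-term adjoint of the normally ordered sum defining $\Lambda_N$ is anti-normally ordered and divergent (the cross terms produce sums like $\sum_k(\pm1)^k L_{-k}A$ with infinitely many creation operators), so it is not clear that $\Lambda_N^\dagger$ exists as an operator, let alone that $\Lambda_N^\dagger-\Lambda_{-N}$ lies in your proposed family of higher defects. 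Flagging this obstacle is not the same as overcoming it, and it arises at exactly the point where the non-Lie-algebra nature of $\cW_3$ bites; without it, $A\Psi'$ for general $\Psi'$ in the cyclic subspace is uncontrolled and $L_0^\dagger=L_0$ does not follow. The paper sidesteps all of this: it never introduces defect operators, but proves mutual orthogonality of \emph{all pairs} of $L_0$-eigenspaces by a single induction on the weighted grade $2\times(\text{number of }L\text{'s})+3\times(\text{number of }W\text{'s})$ of a pair of spanning vectors, using only the adjoint relations for the combinations in (ii) (on which $L_0$ acts by scalars on eigenvectors) and the commutation relations in the ``annihilation'' direction; once $L_0^\dagger=L_0$ is known, $L_n^\dagger=L_{-n}$ is immediate and $W_0^\dagger=W_0$ falls out from two applications of $i[L_0,\cdot]$ to $W_{-1}+W_1-2W_0$. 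To complete your route you would need either to adopt that pair-induction, or to find a way to control $\<\xi,A\Psi'\>$ for general $\Psi'$ that never forms $[\alpha,W_{-m}]$ or $[\beta,W_{-m}]$.
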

\begin{proof}
 We fist show that $L_0^\dagger = L_0$. Each vector of the form \eqref{eq:vector} 
 is an eigenvector of $L_0$ with a real eigenvalue and since
 we are in a lowest weight representation, these vectors -- and hence also the eigenspaces of $L_0$ -- span the full space. So to prove that $L_0^\dagger = L_0$, it is enough to check that these vectors are orthogonal to each other whenever the associated eigenvalues of $L_0$ are not equal. We will do this by performing an induction. 

 \paragraph{Induction on $\mathrm{gr}:=2\times$(number of $L$'s) +  3$\times$(number of $W$'s).}
Assume that for some $j\in\{0,1,\ldots \}$ it holds that whenever $\ell,\ell',k,k'$ are nonnegative integers of ``total grade value'' (see \cite{BMP96} for a similar grading)
\[
 \mathrm{gr}:= 2(\ell+\ell') + 3(k+k')\leq j,
\]
then for any positive integers $m_1,\ldots m_\ell,n_1\ldots n_k$ and $m'_1,\ldots m'_{\ell'},n'_1\ldots n'_{k'}$, the vectors
\begin{align}
\nonumber
\Psi&=   L_{-m_1}\cdots L_{-m_\ell}W_{-n_1}\cdots W_{-n_k}\Omega \\
\Psi'&=   L_{-m'_1}\cdots L_{-m'_{\ell'}}W_{-n'_1}\cdots W_{-n'_{k'}}\Omega
 \label{eq:vectpair}
 \end{align}
are orthogonal unless $\lambda=\lambda'$ where $\lambda=m_1 + \cdots m_\ell + n_1 + \cdots n_{k_W}$ and $ \lambda'= m'_1 + \cdots m'_{\ell'} + n'_1 + \cdots n'_{k'}$ (i.e.\! unless they correspond to the same eigenvalue of $L_0$). Note that for $\mathrm{gr}=0$, our assumption is trivially true as in that case we have a single possible pair of vectors only: $\Psi=\Psi'=\Omega$. We have to show that this remains true for
$\mathrm{gr}=j+1$. We will do this by 
considering all possible pairs of vectors $\Psi,\Psi'$ of the form \eqref{eq:vectpair}
with $\mathrm{gr}=2(\ell+\ell') + 3(k+k')\leq j+1$ and
show that if $\lambda\neq\lambda'$, then $\<\Psi,\Psi'\>=0$.

 \paragraph{Case 1: $\mathbf{\ell+\ell'>0}.$}
 
If $\lambda=\lambda'$, there is nothing to prove, so assume $\lambda\neq\lambda'$. 
Since  now we treat the case when the sum of $\ell$ and $\ell'$ is positive, at least one of them must be nonzero; so say $\ell\geq 1$, meaning that $\Psi$  must contain at least one $L$ operator in its defining expression.
 Let then $\xi$ be the vector obtained by removing the first $L$ operator from the expression of $\Psi$,
 namely, $\Psi=L_{-m_1}\xi$. The vector $\xi$ is still given by an expression of the 
 same form than $\Psi$ or $\Psi'$, but the corresponding eigenvalue of $L_0$ is $(\lambda-m_1)$ and hence $\Psi=L_{-m_1}\xi =  (L_{-m_1}-(-1)^{m_1}L_0)\xi + (-1)^{m_1}(\lambda-m_1)\xi$.
 By assumption (ii), 
 we know that $(L_{-m_1}-(-1)^{m_1}L_0)^\dagger =(L_{m_1}-(-1)^{m_1}L_0)$. Putting all this together, 
 we have
 \begin{align*}
 \<\Psi,\Psi'\> = \< L_{-m_1}\xi,\Psi'\> &= 
 \<(L_{-m_1}-(-1)^{m_1}L_0)\xi + (-1)^{m_1}(\lambda-m_1)\xi,\Psi\>\\
 &=
 \<\xi,(L_{m_1}-(-1)^{m_1}L_0)\Psi'\>+
 (-1)^{m_1}(\lambda-m_1)\<\xi,\Psi'\>\\
 &=\<\xi,L_{m_1}\Psi'\> + (-1)^{m_1}(\lambda-\lambda'-m_1)
 \<\xi, \Psi'\>
 \end{align*}
 We will argue that both terms in the above sum are separately zero and we begin with the second term.
 The total number of $L$'s in the expression giving $\xi$ and $\Psi'$ is $(\ell-1)+\ell'$ and the total number of $W$'s is $k+k'$. Thus, by the inductive
hypothesis, their inner product is zero unless they correspond to the same eigenvalue of $L_0$, in which case we must have $\lambda-m_1=\lambda'$. In either case, the product $(\lambda-\lambda'-m_1)\<\xi, \Psi'\>$ is zero.
Let us treat now the term $\<\xi,L_{m_1}\Psi'\>$. Since $L_{m_1}$ annihilates $\Omega$,  
\[
L_{m_1}\Psi'=L_{m_1} L\ldots L W\ldots W\Omega=[L_{m_1}, L\ldots L W\ldots W]\Omega,
\]
where we just symbolically wrote ``$L\ldots L W\ldots W$'' without detailing the indices.
Using the $\cW_3$-algebra commutation relations, the above vector can be rewritten as a linear combinations
of vectors of the form \eqref{eq:vectpair} with the same associated eigenvalue of $L_0$ -- i.e. with eigenvalue $\lambda'-m_1$ -- but with strictly smaller values of the quantity ``2 $\times$ the number of $L$'s +  3 $\times$ the number of $W$'s ''. (E.g.\! note that when exchanging the two $W$ operators, then, due to the commutation relations, two ``new'' $L$
operators can appear -- but only on the ``cost'' of having two $W$ operators less. This is why we gave more weight to a $W$ operator than an $L$ operator.) Therefore, again by the 
inductive hypothesis and $\l \neq \l'$, we have$\<\xi,L_{m_1}\Psi'\>=0$ and thus $ \<\Psi,\Psi'\>=0$.

 \paragraph{Case 2: $\mathbf{\ell=\ell'=0}.$}
 
 In this second case we have no $L$ operators at all in the defining expressions of our two vectors: $\Psi=W_{-n_1}\ldots W_{-n_k}\Omega$ and $\Psi'=W_{-n_1}\ldots W_{-n'_{k'}}\Omega$. Again we may assume that $\lambda\neq \lambda'$, and so in particular we must have at least one $W$ operator in our expressions (otherwise $\Psi=\Psi'=\Omega$). So say $k\geq 1$ and let $\xi$ be the vector obtain by removing the last $W$ from the expression of $\Psi$. Then $W_{-n_1}\xi=\Psi$ and $L_0\xi=(\lambda-n_1)\xi$.

By Lemma \ref{lm:zerovector}, $W_s\Omega=W_{-s}\Omega=0$ for $s\in \{0,1,2\}$. Since the index set $\{0,1,2\}$ has three elements, there must exists at least two different $r,s\in\{0,1,2\}$ such that neither $W_{-r}\xi$ nor $W_{-s}\xi$ does not correspond to the same eigenvalue of $L_0$ as $\Psi'$; i.e.\! that
$\lambda'\neq (\lambda-n_1+s), (\lambda-n_1+r)$. Then by \eqref{eq:W+}, we have some real numbers $u,d$ such that we have the adjoint relation $(W_{-n_1}+u W_{-r}+d W_{-s})^\dagger = W_{n_1}+u W_{r}+d W_{s}=:A$ holds, hence
\begin{align*}
 \<\Psi,\Psi'\> = \< W_{-n_1}\xi,\Psi'\> &= 
 \<(A^\dagger-(u W_{-r}+d W_{-s})) \xi,\Psi'\>\\
 &=
 \<\xi,A\Psi'\>\,-\,\<(u W_{-r}+d W_{-s})\xi,\Psi'\>.
 \end{align*}
 Since both $A=W_{n_1}+u W_{r}+d W_{s}$ and $B=(u W_{-r}+d W_{-s})$ annihilate $\Omega$, one can rewrite the above expressions using commutators as a linear combination of terms with strictly smaller total value of the quantity ``2 $\times$ the number of $L$'s +  3 $\times$ the number of $W$'s''
 than the original value $\mathrm{gr}$. Moreover, by our choice of $s$ and $r$, the corresponding eigenvalues of $L_0$ of the terms on the two sides of the inner product never coincide. So again by the inductive hypothesis, each of those inner product values are zero and hence $\Psi$ and $\Psi'$ are orthogonal.

Now we know that $L_0^\dagger=L_0$.
By Assumption (ii) we have $(L_n - (-1)^{n}L_0)^\dagger = L_{-n} -(-1)^{n} L_0$
and that $(W_{-1}+W_1-2W_0)^\dagger = (W_{-1}+W_1-2W_0)=:A$.
By taking real-linear combinations, we conclude then that $L_n^\dagger =  L_{-n}$ for all $n\in\ZZ$.
Then also $B:=i[L_0,A]=i(W_{-1}-W_1)$ is symmetric, and so is 
$C=i[L_0,B]=W_{-1}+W_1$ and $\frac12(C-A)=W_0$. We then have 
\[
 W_n^\dagger = \frac{1}{2n}[L_n,W_0]^\dagger = -\frac{1}{2n}[L_{n}^\dagger,W_0^\dagger] = 
-\frac{1}{2n}[L_{-n},W_0]=W_{-n}.
\]
\end{proof}

\begin{corollary}\label{cr:unitary}
 The irreducible lowest weight representation of the $\cW_3$-algebra with central charge $c \ge 2$
 and lowest weights $h=w=0$ is unitary.
\end{corollary}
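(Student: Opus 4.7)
The corollary should follow almost immediately by combining the construction given in Corollary \ref{cor:R-hermitian} with the abstract unitarity criterion of Theorem \ref{th:unitarity}, together with the uniqueness of lowest weight representations admitting a non-degenerate invariant Hermitian form discussed in Section \ref{lowest}.

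The plan is as follows. First, given $c\geq 2$, set $\kappa := \sqrt{(c-2)/12}\in\RR$, so that $c = 2+12\kappa^2$, and choose $b\in\RR$ with $b^2 = 16/(22+5c)$ (possible since $c\geq 2 > -22/5$). Then apply Corollary \ref{cor:R-hermitian} with $q_1=q_2=0$ and these values of $\kappa,b$. This produces a lowest weight representation $\{(L_n,W_n)\}_{n\in\ZZ}$ of the $\cW_3$-algebra with central charge $c$, lowest weight $(h,w)=(0,0)$, living on an inner-product space with a cyclic lowest weight vector $\Omega$, and satisfying the weak symmetry condition
\[
(p(z)T(z))^\dagger = \overline{p}(z)T(z), \qquad (r(z)M(z))^\dagger = \overline{r}(z)M(z)
\]
for all trigonometric polynomials $p,r$ with $p(-1)=r(-1)=r'(-1)=0$.

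Second, since the representation so constructed satisfies precisely Assumptions (i) and (ii) of Theorem \ref{th:unitarity}, that theorem applies and upgrades the weak symmetry to the full adjoint relations $L_n^\dagger = L_{-n}$ and $W_n^\dagger = W_{-n}$ for all $n\in\ZZ$. In other words, the given inner product is an invariant inner product, making the representation unitary.

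Finally, to identify this with the \emph{irreducible} lowest weight representation $V^{\cW_3}_{c,0,0}$, recall from Section \ref{lowest} that any lowest weight representation carrying a non-degenerate invariant Hermitian form with $\langle\Omega,\Omega\rangle=1$ is isomorphic to the unique irreducible lowest weight representation with the same $(c,h,w)$. Since a positive definite form is automatically non-degenerate, the representation constructed above is isomorphic to $V^{\cW_3}_{c,0,0}$, which is therefore unitary. No step here is really a serious obstacle: the entire content has already been packaged into Corollary \ref{cor:R-hermitian} and Theorem \ref{th:unitarity}, and this final corollary is simply the combination together with the identification step.
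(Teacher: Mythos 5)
Your proposal is correct and follows exactly the route the paper intends: instantiate Corollary \ref{cor:R-hermitian} with $q_1=q_2=0$ and $\k=\sqrt{(c-2)/12}$ to obtain a lowest weight representation satisfying Assumptions (i) and (ii), upgrade the weak symmetry to full invariance via Theorem \ref{th:unitarity}, and identify the result with $V^{\cW_3}_{c,0,0}$ by the uniqueness of lowest weight representations admitting an invariant inner product. No discrepancies with the paper's argument.
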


\begin{remark}
By the existence theorem \cite[Theorem 4.5]{Kac98}, any lowest weight representation 
where the lowest weight vector $\Omega$ satisfies the extra condition $L_{-1}\Omega=0$, generates
a vertex algebra with translation operator $T=L_{-1}$. (This condition implies that the lowest weight must be $(h,w)=(0,0)$ but not the other way around. Note however that in the unitary case, $h=0$ alone implies $L_{-1}\Omega=0$.) This vertex algebra evidently has a Virasoro element $\nu=L_{-2}\Omega$ whose corresponding field has $T$ as a component, and since
the representation space is the direct sum of eigenspaces of $L_0$ with non-negative integral eigenvalues
and each eigenspace is finite dimensional as it is spanned by finite many vectors of the form \eqref{eq:vector}, the resulting structure is actually a vertex operator algebra (VOA). 
Moreover, if the representation we started with was unitary, then the obtained VOA is also unitary in the sense of \cite[Definition 5.2]{CKLW18}; see e.g.\cite[Proposition 5.17]{CKLW18}, which says that unitarity follows if the VOA is generated by a family of Hermitian\footnote{Note that Hermitianity of the fields $L(z)$ and $W(z)$ in the sense of \cite{CKLW18} is precisely equivalent to the symmetry of $T(z)=z^2L(z)$ and $M(z)=z^3W(z)$.} quasi-primary fields. The unitary VOAs constructed in this way must coincide with the simple quotients of the freely generated VOAs defined from the Verma modules for the $\cW_3$-algebra in \cite[Section 5]{Linshaw09}.
The latter can be identified as special cases of the universal VOAs in \cite{DK05, DK06},
as a consequence of \cite[Theorem 3.14]{DK06}, cf.\! also  \cite[Proposition 3.11, Example 3.14]{DK05}. 
\end{remark}

It is worth noting that with the same induction technique we used in this section, we can show that if a lowest weight representation of the Virasoro algebra $\{L_n\}_{n\in\ZZ}$ on an inner product space satisfies (ii) in the sense that $L_0-(-1)^nL_n = (L_0 -(-1)^n L_{-n})^\dagger$ for all $n\in\ZZ$, then in fact our inner product is an invariant form for the representation; in this case we do not need to assume that $h=0$.
 \begin{proposition}\label{prop_virR}
  Let $\{L_n\}_{\{n\in\ZZ\}}$ be a lowest weight representation of the Virasoro algebra
  with lowest weight $h\in\RR$ and lowest weight vector $\Omega$, and suppose that
  $(L_0 -(-1)^n L_{-n})^\dagger=L_0-(-1)^nL_n$ for all $n\in\ZZ$ with respect to a given Hermitian form $\<\cdot, \cdot\>$ (not necessarily the canonical one).
  Then $L_n = L_{-n}^\dagger$ for all $n\in\ZZ$.
 \end{proposition}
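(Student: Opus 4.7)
The plan is to mimic the induction argument used in the proof of Theorem \ref{th:unitarity}, but in the much simpler setting where only Virasoro generators appear. The first and main step is to establish that $L_0^\dagger = L_0$ (this is where the argument has actual content); once this is known, the conclusion follows by a single algebraic manipulation from the hypothesis.

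For the first step, I would show that the vectors $\Psi = L_{-m_1}\cdots L_{-m_\ell}\Omega$ (with $1\le m_1\le\cdots\le m_\ell$) corresponding to different eigenvalues of $L_0$ are orthogonal with respect to $\<\cdot,\cdot\>$. Since $L_0$ acts on such a vector as the scalar $h + m_1+\cdots+m_\ell \in \RR$, and these vectors span the representation, orthogonality across distinct eigenvalues combined with reality of the eigenvalues immediately gives $\<L_0\Psi,\Psi'\> = \<\Psi,L_0\Psi'\>$ on a spanning set, hence $L_0^\dagger = L_0$. The orthogonality itself is proved by induction on $\ell+\ell'$. For the inductive step with $\ell\ge 1$ I would write $\Psi = L_{-m_1}\xi$ and apply the hypothesis in the form $(L_{-m_1} - (-1)^{m_1} L_0)^\dagger = L_{m_1} - (-1)^{m_1} L_0$, which yields
\[
\<\Psi,\Psi'\> = (-1)^{m_1}(\lambda - m_1 - \lambda')\<\xi,\Psi'\> + \<\xi, L_{m_1}\Psi'\>,
\]
where $\lambda, \lambda'$ denote the $L_0$-eigenvalues of $\Psi,\Psi'$. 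The first term vanishes by the induction hypothesis (either the factor is zero or $\<\xi,\Psi'\>=0$ because $\xi$ has strictly fewer $L$-operators and eigenvalue $\lambda - m_1 \neq \lambda'$). The second term is handled by commuting $L_{m_1}$ through $L_{-m_1'},\ldots,L_{-m_{\ell'}'}$ until it annihilates $\Omega$; each commutator in the Virasoro relations produces terms with strictly fewer $L$-operators, so $L_{m_1}\Psi'$ becomes a linear combination of vectors of eigenvalue $\lambda' - m_1 \neq \lambda - m_1$, and the inductive hypothesis again gives zero.

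Once $L_0 = L_0^\dagger$ has been established, the assumption $(L_0 - (-1)^n L_{-n})^\dagger = L_0 - (-1)^n L_n$ becomes $L_0 - (-1)^n L_{-n}^\dagger = L_0 - (-1)^n L_n$ (using that $(-1)^n$ is real), so $L_n^\dagger = L_{-n}$ for every $n \in \ZZ$. I expect the main (and really only) obstacle to be administering the induction cleanly: one has to carefully verify that the commutator expansion of $L_{m_1}\Psi'$ really does produce vectors with strictly smaller value of $\ell' $, and that the eigenvalue bookkeeping lines up with $\lambda \neq \lambda'$. Crucially, in contrast with the $\cW_3$ case treated in Theorem \ref{th:unitarity}, here there is no analog of Lemma \ref{lm:zerovector} needed, which is precisely why the hypothesis $h=0$ can be dropped.
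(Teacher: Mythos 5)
Your argument is correct, and it reaches the same reduction as the paper: everything hinges on proving $L_0^\dagger=L_0$, i.e.\ on the mutual orthogonality of distinct $L_0$-eigenspaces, via the identity $(L_{-j}-(-1)^jL_0)^\dagger=L_j-(-1)^jL_0$, after which the conclusion is the one-line algebraic manipulation you give. The difference is in how the induction is organized. You transplant the induction of Theorem \ref{th:unitarity} and induct on the total number $\ell+\ell'$ of $L$-operators in the pair of spanning vectors; this is what forces the bookkeeping you anticipate, namely checking that the normal-ordered rewriting of $L_{m_1}\Psi'$ produces terms with at most $\ell'$ operators each (so that the pair total strictly drops) while retaining the definite $L_0$-eigenvalue $\lambda'-m_1\neq\lambda-m_1$; this does go through, since each Virasoro commutator replaces two operators by at most one. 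The paper instead inducts directly on the energy level: assuming $V_h,\dots,V_{h+n}$ are pairwise orthogonal, it takes $\xi=L_{-j}\zeta\in V_{h+n+1}$ and $\eta\in V_{h+k}$ with $k\le n$ and computes $\<L_{-j}\zeta,\eta\>=\<\zeta,L_j\eta\>=0$, which closes immediately because $L_j$ maps $V_{h+k}$ into $V_{h+k-j}$ and all eigenspaces involved sit at level $\le n$ --- no normal-ordering or operator counting is needed. Both versions use the reality of $h$ to move $L_0$-eigenvalues across the Hermitian form, and your closing observation is accurate: no analogue of Lemma \ref{lm:zerovector} is required, which is exactly why the hypothesis $h=0$ can be dropped here but not in the $\cW_3$ setting.
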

 \begin{proof}
  As in Theorem \ref{th:unitarity}, it is enough to prove that $L_0^\dagger = L_0$.
  Let $V_{h+n}$ be the eigenspaces of $L_0$.
  Assume that $V_h,\cdots,V_{n+h}$ are pairwise orthogonal. (For $n=0$ this is trivial.) 
  This implies that $L_0$ is symmetric
  when restricted to $V_h\oplus\cdots\oplus V_{h+n}$.
  Let $\xi \in V_{h+n+1}, \eta \in V_{k+h}, k \le n$.
  We have to show that $\<\xi, \eta\> = 0$.
  We may assume that $\xi = L_{-j}\zeta$, where $\zeta \in V_{h+n-j+1}$,
  as the general case is a linear combination.
  We have $(L_{-j} -(-1)^jL_0)^\dagger = L_{j} -(-1)^jL_0=:A$ and
  $L_{-j}=A^\dagger +(-1)^jL_0$, $L_{j}=A +(-1)^jL_0$ so 
  \begin{align*}
   \<\xi, \eta\> = \<L_{-j} \zeta, \eta\> &= \<(A^\dagger +(-1)^jL_0)\zeta, \eta\> 
   = \<\zeta, (A +(-1)^jL_0)\eta\> = \<\zeta, L_j\eta\> = 0,
  \end{align*}
  where the 3rd equality holds since $L_0$ is symmetric on $V_0\oplus\cdots\oplus V_{h+n}$,
  and the last equality follows from $L_j \eta \in V_h \oplus \cdots \oplus V_{h+n-j}$
  and the hypothesis of induction.
\end{proof}

\subsection{More unitary representations}\label{more}

It is also possible to construct unitary representations on the full space of the 
two commuting currents we used. Suppose again that we have two commuting 
$\uone$-current fields $J_{[j]}(z)=\sum_{n\in\ZZ}J_{n}z^{-n} = za_{[j]}(z)$ 
($j=1,2$) having a common lowest weight vector $\Omega_{q_1,q_2}$ with lowest weights
$q_1$ and $q_2$, respectively and that we have a fixed inner product on our representation space
making our currents $J_{[1]}(z), J_{[2]}(z)$ symmetric. Such currents on an inner product space indeed exist if $q_1,q_2\in \RR$ (e.g. consider the tensor product of two lowest weight representations). 
We now perform transformation $\tilde\varphi_{0,i\k}$; i.e.\! while remaining on the same inner product space, we consider the currents $\tilde\varphi_{0,i\k}(\Jone(z))=\Jone(z)+i\k,
\tilde\varphi_{0,i\k}(\Jtwo(z))=\Jtwo(z)$ instead of the original ones $\Jone(z),\Jtwo(z)$.
The vector $\Omega_{q_1,q_2}$ is still a common lowest weight vector for these currents, but this time with lowest weights $\tilde{q}_1=q_1+i\k$ and $\tilde{q}_2=q_2$. Recall that the transformation  $\tilde\varphi_{0,i\k}$ can be viewed as a composition of a representation with a Lie algebra automorphism, and can be further composed with the Fateev-Zamolodchikov realization of the $\cW_3$-algebra. By the usual abuse of notation, we shall denote the fields constructed from
$\tilde\varphi_{0,i\k}(\Jone(z))$ and $\tilde\varphi_{0,i\k}(\Jtwo(z))$ using the 
formulas \eqref{eq:BSM-T} and \eqref{eq:BSM-W} by  $\tilde\varphi_{0,i\kappa}(\tilde T(z;\k))$
and  $\tilde\varphi_{0,i\kappa}(\tilde M(z;\k))$. Note that $\k$ appears twice in these expressions: its value effects both the transformation we perform on the currents and the Fateev-Zamolodchikov
construction. In other words, we use the \emph{same} $\kappa$ value in both cases; this has the effect
that $\rho(z)$ vanishes from the formula: by a straightforward computation one has that
\begin{align*}
 \tilde \varphi_{0, i\kappa}(\tilde T(z;\k))
 &= T_{[1]}(z) + \k {\Jone}'(z) + \frac{\k^2}{2} + T_{[2]}(z),
 \nonumber
 \\
 \tilde \varphi_{0, i\kappa}(\tilde M(z;\k))
 &= 
\frac{b}{3\sqrt 2} : \Jtwo^3:\!(z) 
  - \sqrt 2 b \left(T_{[1]}(z) + \k {\Jone}'(z) + \frac{\k^2}{2}\right)\Jtwo(z) \nonumber \\
 &\qquad + \frac{3b\k}{2\sqrt 2} \Jone'(z)\Jtwo(z) -  \frac{3b\k}{2\sqrt 2}\Jone(z)\Jtwo'(z) 
 + \frac{b\k^2}{\sqrt 2} \Jtwo(z)  - \frac{b\k^2}{2\sqrt2} \Jtwo''(z).
\end{align*}
where of course $T_{[j]}(z)=\frac12: J^2_{[j]}:\!(z)$ $(j=1,2)$.

Note that $ \tilde \varphi_{0, i\kappa}(\tilde T(z;\k))$ is the sum of the following two stress-energy fields: the canonical one of the second current, and the modified one \eqref{constrKR87} -- with the constant $\eta=0$ -- of the first current. Note also that both $ \tilde \varphi_{0, i\kappa}(\tilde T(z;\k))$
and  $\tilde \varphi_{0, i\kappa}(\tilde M(z;\k))$ are manifestly symmetric if $\k$ is real: so for any 
 $\k \in \RR$, they give a unitary representation of the $\cW_3$-algebra with central charge $c=2+12\k^2$. 
Moreover, by Proposition \ref{pr:lwvectors}, $\Omega_{q_1,q_2}$ is a lowest weight vector for this representation, and the corresponding lowest weight $(h,w)$ can be obtained by replacing $q_1$ by $q_1+i\k$
in the formula of  Proposition \ref{pr:lwvectors}. After some simplifications, this gives 
\begin{align}\label{eq:morereps}
h=\frac{q_1^2+q_2^2+\k^2}{2},\;\;\;\;
w = b\left(\frac{q_2^3 - 3q_1^2 q_2}{3\sqrt 2} \right).
\end{align}
\begin{theorem}\label{th:2c}
 Let $c\geq 2$. By the above construction, the irreducible lowest weight representation of the $\cW_3$-algebra $V^{\cW_3}_{c,h,w}$ is unitary for
 \[
 h\geq \frac{c-2}{24},\;\; |w|\leq \sqrt{\frac{8}{198+45c}}
 \left(2h-\frac{c-2}{12}\right)^{\frac{3}{2}}\;\;\;\;(h,w\in\RR).
 \]
\end{theorem}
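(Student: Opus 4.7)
My plan is to show that the manifestly unitary construction described just above the theorem statement can be tuned, via the real parameters $\kappa, q_1, q_2$, to hit every triple $(c,h,w)$ in the claimed region. Given $c\geq 2$, I would first set $\kappa = \sqrt{(c-2)/12}\in\RR$, so that the central charge of the representation is $c=2+12\kappa^2$. The formula \eqref{eq:morereps} then reads $h = \tfrac{1}{2}(q_1^2+q_2^2) + \tfrac{c-2}{24}$, which forces the necessary condition $h\geq \tfrac{c-2}{24}$; conversely, any such $h$ is realized by choosing $q_1,q_2\in\RR$ on the circle of radius $r:=\sqrt{2h-(c-2)/12}$.

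Next, I would introduce polar coordinates $q_1 = r\sin\theta$, $q_2=r\cos\theta$ and use the elementary identity
\begin{equation*}
 q_2^3 - 3q_1^2 q_2 = r^3\bigl(\cos^3\theta - 3\sin^2\theta\cos\theta\bigr) = r^3\cos(3\theta).
\end{equation*}
Substituting into \eqref{eq:morereps} gives $w = \tfrac{b}{3\sqrt 2}\,r^3\cos(3\theta)$, and a direct manipulation of $b = 4/\sqrt{22+5c}$ shows
\begin{equation*}
 \frac{b}{3\sqrt 2} \;=\; \frac{4}{3\sqrt{2(22+5c)}} \;=\; \sqrt{\frac{8}{198+45c}},
\end{equation*}
which is precisely the constant appearing in the statement. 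As $\theta$ ranges over $\RR$, the factor $\cos(3\theta)$ sweeps out the whole interval $[-1,1]$, so $w$ can take any value with $|w|\leq \sqrt{\tfrac{8}{198+45c}}\,r^3 = \sqrt{\tfrac{8}{198+45c}}(2h-\tfrac{c-2}{12})^{3/2}$. Thus every $(c,h,w)$ in the claimed region is realized by some choice of the real parameters $\kappa,q_1,q_2$.

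To conclude, I would invoke the construction preceding the theorem: with the above choice, the fields $\tilde\varphi_{0,i\kappa}(\tilde T(z;\kappa))$ and $\tilde\varphi_{0,i\kappa}(\tilde M(z;\kappa))$ are manifestly symmetric (since they are built from symmetric currents $J_{[1]}(z)$, $J_{[2]}(z)$, their symmetric circle-derivatives, their symmetric normal powers, and the real constants $\kappa, b$), so they provide a \emph{unitary} representation of the $\cW_3$-algebra on the full tensor product of the two $\uone$-current representations. The vector $\Omega_{q_1,q_2}$ is a lowest weight vector with parameters $(c,h,w)$ by Proposition \ref{pr:lwvectors} and the parameter matching above. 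Restricting to the cyclic subspace generated by $\Omega_{q_1,q_2}$ therefore yields a lowest weight representation with parameters $(c,h,w)$ carrying a non-degenerate invariant inner product. By the uniqueness discussed in Section \ref{lowest}, this cyclic subrepresentation is isomorphic to $V^{\cW_3}_{c,h,w}$, which is thus unitary.

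The routine computations have already been done in the preceding subsection, so no obstacle of substance remains; the only point requiring care is the verification of the constant identity $\tfrac{b}{3\sqrt 2}=\sqrt{\tfrac{8}{198+45c}}$ and the trigonometric identification $q_2^3-3q_1^2q_2 = r^3\cos(3\theta)$, both of which are mechanical.
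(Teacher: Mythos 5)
Your proposal is correct and follows essentially the same route as the paper: fix $\kappa=\sqrt{(c-2)/12}$, realize $h$ via $q_1^2+q_2^2=2h-\frac{c-2}{12}$, and determine the range of $q_2^3-3q_1^2q_2$ on that circle, then conclude by manifest symmetry of the fields and uniqueness of the irreducible lowest weight representation with an invariant inner product. The only (cosmetic) difference is that you find the range via the triple-angle identity $r^3\cos(3\theta)$, whereas the paper optimizes the cubic $4q_2^3-3C^2q_2$ directly; both give exactly the interval $[-C^3,C^3]$.
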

\begin{proof}
For each value of $(q_1, q_2, \k) \in \RR^3$, we have a unitary representaition with
central charge $c = 2+ 12\k^2$ and $(h,w)$ given by \eqref{eq:morereps}.
What we need to find out now is the set of $(c,h,w)$ values that can be realized in this manner.

It is clear that the possible values of $(c,h)$ are
$c \ge 2$ and $h \ge \frac{\k^2}2$.
We consider them (hence $\k,b$ and $q_1^2+q_2^2$) as given.
By varying $q_1, q_2$ under $q_1^2+q_2^2 = 2h - \frac{c-2}{12} =: C^2$ with $C\ge 0$, the function
\[
 q_2^3 - 3q_1^2 q_2 = 4q_2^3 - 3C^2q_2
\]
takes a local maximum at $q_2 = -\frac C 2$, hence the maximum under the condition $|q_2| \le C$
is $C^3$ (in both cases $q_2 = C, -\frac C 2$) and every value between them is possible.
Similarly, the minimum is $-C^3$.
This means that $|w| \le \sqrt{\frac{8}{198+45c}}
 \left(2h-\frac{c-2}{12}\right)^{\frac{3}{2}}$.
\end{proof}

For the convenience of the algebra-oriented reader, we show the ``unshifted'' fields:
\begin{align*}
 &\tilde \varphi_{0, i\kappa}(\tilde L(z;\a_0)) \\
 &:= \Lone(z) + \sqrt 2 \a_0 \partial\aone(z) + \sqrt 2\a_0 \aone(z)z^{-1} - \a_0^2 z^{-2}
 + \Ltwo(z) \\
 &\tilde \varphi_{0, i\kappa}(\tilde W(z;\a_0)) \\
 &:= \frac{b}{3\sqrt 2} \colon \atwo(z)^3\colon  - \sqrt 2 b\left(\Lone(z) + \sqrt 2\a_0 \aone(z)z^{-1}
 + \frac1{2\sqrt 2}\a_0 \partial\aone(z)+ \frac{\a_0^2 z^{-2}}2\right) \cdot \atwo(z) \\
 &\qquad - \frac{3b\a_0}{2} \aone(z) \cdot \partial\atwo(z) - \frac{b\a_0^2}{\sqrt2} (\partial^2 \atwo(z) + 3\partial \atwo(z)z^{-1}).
\end{align*}

This results allows us to completely characterize unitarity in the region $2\le c\le 98$.
\begin{corollary}\label{cr:2c98}
Let $2\le c \le 98$. Then the irreducible lowest weight representation of the $\cW_3$-algebra $V^{\cW_3}_{c,h,w}$ is unitary if and only if $f_{11}(h,c)-w^2 = \frac{h^2(96h-3(c-2))}{27(5c+22)} - w^2 \ge 0$.
\end{corollary}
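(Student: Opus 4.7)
The plan is to separate necessity from sufficiency and handle the latter via the standard Kac-determinant continuity argument, combining the results of Section~\ref{Kacdet}, Theorem~\ref{th:2c}, and Corollary~\ref{cr:unitary}. Necessity is immediate: $f_{11}(h,c)-w^2$ coincides, up to a strictly positive multiplicative constant, with the Kac determinant at level $N=1$. If $V^{\cW_3}_{c,h,w}$ is unitary then the canonical Hermitian form at every level is positive semidefinite, forcing $\det(M_{1,c,h,w})\ge 0$ and hence $f_{11}(h,c)-w^2\ge 0$.

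For sufficiency, I work first on the open set
\[
H \;:=\; \{(c,h,w)\in(2,98)\times\RR^2 :\ f_{11}(h,c)-w^2>0\}.
\]
By the observation at the end of Section~\ref{Kacdet} --- the off-diagonal factors $f_{mn}$ with $m\ne n$ are strictly positive for $2<c<98$ (since $\a_\pm^2$ are non-real complex conjugates) and the diagonal factors $f_{mm}$ are monotone in $m$ --- every Kac determinant $\det(M_{N,c,h,w})$ is strictly positive on $H$, so in particular $\underline{V}^{\cW_3}_{c,h,w}=V^{\cW_3}_{c,h,w}$ there. Moreover $H$ is path-connected: for fixed $c_0\in(2,98)$ the slice $H_{c_0}=\{(h,w):h>\tfrac{c_0-2}{32},\ w^2<f_{11}(h,c_0)\}$ is an open ``trumpet'' in $\RR^2$, and these slices glue continuously as $c$ varies. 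Theorem~\ref{th:2c} supplies a unitary reference point in each slice: any $(c_0,h_*,0)$ with $h_*>\tfrac{c_0-2}{24}$ lies both in Theorem~\ref{th:2c}'s region (so $M_{N,c_0,h_*,0}$ is positive definite for every $N$) and in $H$ (since $\tfrac{c_0-2}{24}>\tfrac{c_0-2}{32}$). Given an arbitrary $(c_0,h_0,w_0)\in H$, I connect it to such a reference point by a path inside $H$: first shrink $w$ linearly to $0$ (which only strengthens $w^2<f_{11}(h_0,c_0)$), then move $h$ linearly from $h_0$ to $h_*$ (both endpoints exceed $\tfrac{c_0-2}{32}$, so the segment stays admissible and $f_{11}(h,c_0)>0=w^2$ throughout). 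Along this path the Gram matrices $M_{N,c_0,h,w}$ are real-symmetric, continuous in the parameters, and have strictly positive determinant, so no eigenvalue can cross zero; positive definiteness therefore propagates from the reference point to $(c_0,h_0,w_0)$, giving unitarity of $V^{\cW_3}_{c_0,h_0,w_0}$.

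The remaining boundary points --- those with $2\le c\le 98$ and $f_{11}(h,c)-w^2\ge 0$ but not in $H$, i.e.\ either $c\in\{2,98\}$ or $f_{11}(h,c)=w^2$ --- are handled by the standard closure argument: approximate such a point by a sequence $(c_n,h_n,w_n)\in H$. By the construction of the canonical form in Appendix~\ref{verma} (and Proposition~\ref{pr:invariant}), its matrix entries are polynomial in $c,h,w,\tfrac{1}{22+5c}$ and hence continuous in $(c,h,w)$; each $M_{N,c_n,h_n,w_n}$ is positive definite by the previous paragraph, and the entrywise limit $M_{N,c,h,w}$ is positive semidefinite. Passing to the quotient by the kernel produces an invariant inner product on $V^{\cW_3}_{c,h,w}$. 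The main obstacle I foresee is verifying carefully that the interpolating path genuinely remains inside $H$ --- a geometric check relying on the sign analysis of $f_{11}$ and the elementary inequality $\tfrac{c-2}{24}>\tfrac{c-2}{32}$ for $c>2$ --- together with controlling the edges $c\in\{2,98\}$, where the off-diagonal positivity of $f_{mn}$ driving the interior argument degenerates. However, since the closure step uses only continuity of the Gram-matrix entries and the fact that a limit of positive-definite matrices is positive semidefinite, these edges pose no genuine obstruction.
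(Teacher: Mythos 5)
Your overall architecture matches the paper's: necessity from the level-one Kac determinant, and sufficiency by showing all Kac determinants are strictly positive on the open region $H$, connecting an arbitrary point of $H$ to a reference point supplied by Theorem \ref{th:2c}, and then passing to the closure $\overline{H}$ by continuity of the Gram matrices. That part is sound.

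However, there is a genuine gap in your treatment of the ``remaining boundary points.'' The region $R=\{2\le c\le 98,\ f_{11}(h,c)-w^2\ge 0\}$ is \emph{not} contained in $\overline{H}$: the paper's proof explicitly writes $R=\overline{H}\cup\{(c,0,0):2\le c\le 98\}$, and for $2<c\le 98$ the vacuum point $(c,0,0)$ cannot be approximated by points of $H$. Indeed, $f_{11}(h,c)=\frac{h^2(96h-3(c-2))}{27(5c+22)}$ is strictly negative for every $h\ne 0$ with $h<\frac{c-2}{32}$, so $f_{11}(h,c)-w^2<0$ throughout a punctured neighbourhood of $(c,0,0)$ when $c>2$ (and the same obstruction persists if $c$ is also varied, since one still needs $h>\frac{c-2}{32}$, which is bounded away from $0$ near a fixed $c_0>2$). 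Hence your ``standard closure argument'' simply does not reach these points; only $(2,0,0)$ lies in $\overline{H}$. This is not a technicality: the vacuum representations with $c>2$ are exactly the case the introduction flags as inaccessible from the $(h,w)\ne(0,0)$ region, and they are the reason the paper needs the separate, nontrivial Theorem \ref{th:unitarity} and Corollary \ref{cr:unitary}. You list Corollary \ref{cr:unitary} among your ingredients in the opening sentence but never invoke it in the argument; to close the gap you must cite it explicitly for the line $\{(c,0,0):2\le c\le 98\}$ and restrict the limiting argument to genuine points of $\overline{H}$.
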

\begin{proof}
 As we already mentioned at \eqref{eq:f11}, the condition $f_{11}(h,c)-w^2\geq 0$ is necessary for unitarity, so we only need to show the ``if'' part. Consider the open region $H$ and the closed region $R$ defined by
 \begin{align*}
H &=\{ (c,h,w)\in\RR^3 \, | \, 2<c<98, \, f_{11}(h,c)-w^2>0 \}, \\
R &=\{ (c,h,w)\in\RR^3 \, | \, 2\leq c\leq 98,\,  f_{11}(h,c)-w^2\geq 0 \}. 
 \end{align*}
Our aim is to prove unitarity in the region $R$. Now one that $R=\overline{H}\cup \{(c,0,0)|2\leq c\leq 98\}$ and Corollary \ref{cr:unitary} tells us that we indeed have unitarity on the line $\{(c,0,0)|2\leq c\leq 98\}$; so let us turn our attention to the region $\overline{H}$. 
 
It is clear that $f_{11}(h,c)$ is monotonically increasing with respect to $h$ and hence that $(c,h,w)\in H$ if and only if  $2<c<98, h > \frac{c-2}{32}, |w|<\sqrt{f_{11}(c,h)}$. In particular, $H$ is connected. As we already mentioned at \eqref{eq:f11}, in this region all Kac determinants are positive and hence, as was explained in Section \ref{Kacdet}, unitarity at a single point of $H$ implies unitarity for the entire closure $\overline{H}$. Since e.g.\! $(3,\frac{1}{24},0)\in H$, and at $c=3, h=\frac{1}{24}, w=0$ unitarity holds
by the previous theorem, therefore, we have unitarity on $\overline H$.
\end{proof}

\section{Outlook}\label{outlook}
The existence of unitary vacuum representations urges us to investigate
the conformal field theories (conformal nets and vertex operator algebras, see e.g.\! \cite{CKLW18})
related with these representations. Specifically, we are interested in the following questions.
\begin{itemize}
 \item Can one always construct a conformal net using the unitary vacuum representations?
 \item Are all other unitary representations associated with DHR sectors of these conformal nets? 
 (C.f.\! \cite{Carpi04, Weiner17} for the similar question regarding the Virasoro algebra.)
 \item How does the present result generalize to other $\cW$-algebras?
\end{itemize}

\subsubsection*{Acknowledgements}
 We thank Andrew Linshaw for discussions on Verma modules and Simon Wood for providing us with Mathematica codes. We also thank the Simons Center for Geometry and Physics -- the place where we started to write up this article -- and in particular the organizers of the program ``Operator Algebras and Quantum Physics'' for inviting us there.

S.C.\! and Y.T.\! also acknowledge the MIUR Excellence Department Project awarded to the Department of Mathematics, University of Rome Tor Vergata CUP E83C18000100006
and University of Rome ``Tor Vergata'' funding scheme ``Beyond Borders'' CUP E84I19002200005.

\appendix

\section{Lowest weight representations and Verma modules}\label{verma}
Since the $\cW_3$-algebra is not a Lie algebra, the notion and existence of Verma modules with invariant forms are not evident.
In physics literature they are either assumed without any further explanation \cite{BS93, Artamonov16} or claimed that they can be obtained -- in a similar manner to the Lie algebra case -- through the quotient of the ``universal covering algebra'' \cite{BMP96} which however cannot be constructed in the same way as in a Lie algebra because the commutation relation contains an infinite sum in terms of the basic fields. The more careful treatment of infinite sums at \cite[Section 5]{Linshaw09} might lead to a sensible construction, but the argument
as it is written there has the problem\footnote{We contacted the author who
indicated some possible remedies that might work; in any case, we shall not make use
of such a universal algebra.} that the ideal contains only finite sums, hence infinite sums cannot be reordered).
In \cite{DK05}, a Poincar\'e-Birkhoff-Witt type theorem is shown for $\cW$-algebras in general; however, 
it is in an abstract setting and it is not
evident for us whether it validates the particular form of Verma modules and invariant forms appearing both
in the physicist literature and also in our work. For these reasons, we decided to provide our own proof of these facts. 

Even if these results might be well-known to experts, the argument we give could be interesting on its own: instead of being
algebraic, in some sense it is analytic. We start with concrete constructions covering only some values of the central charge and lowest weights and then show
that all these objects -- e.g. the invariant form -- can be continued analytically to all values of the parameters.

A bilinear form $(\cdot,\cdot)$ is {\bf twisted-invariant} for a representation $\{L_n,W_n\}_{\{n\in\ZZ\}}$
of the $\cW_3$-algebra, if $(L_n x,y) = (x,L_{-n}y)$ and $(W_n x,y) = (x,W_{-n}y)$
for all $x,y$ vectors from the representation space and $n\in\ZZ$. The form is said to be
{\bf symmetric}, if $(x,y)=(y,x)$ for all $x,y$, and a symmetric form is
{\bf nondegenerate}, if ``$(x,y)=0$ for all $y$'' implies that $x=0$. Note that whereas in the main part of article we considered invariant Hermitian forms, to be more general, here we consider twisted-invariant bilinear forms. (It is not difficult to see that the existence of a nonzero invariant Hermitian form for a lowest weight representation rules out non-real lowest weights.)

To simplify notations, we set $K_{(X,n)}=L_n$ for $X=L$ and $K_{(X,n)}=W_n$ for $X=W$ and often write just $K_\nu$, with a shorthand notation $\nu = (X,n)$. We also set $-(X,n):=(X,-n)$ and further introduce a level $\l$ and another quantity $g$ 
by setting,
for every $r\in \{0,1,\cdots\}$ and 
and $\nu_1=(X_1,n_1),\cdots, \nu_r=(X_r,n_r)\in \{L,W\}\times \ZZ$,
\begin{align*}
 \l(\nu_1,\cdots ,\nu_r) &:= n_1 + \cdots + n_r  \\
 g(\nu_1,\cdots ,\nu_r) &:= d(X_1) + \cdots + d(X_r)
\end{align*}
where $d(L)=2$ and $d(W)=3$ (see \cite{BMP96} for a similar grading).
Note that both $g$ and $\l$ are completely symmetric in their arguments. 

Let $\{L_n,W_n\}_{n\in\ZZ}$ form a lowest weight representation of the
$\cW_3$-algebra with central charge $c\neq -\frac{22}5$, 
lowest weight $(h,w)\in \CC^2$ and
lowest weight vector $\Psi$.
Then,
using the $\cW_3$-algebra relations \eqref{eq:comm} and that $\Psi$ is a lowest weight
vector, it is straightforward to show that for any permutation $\sigma$, the difference 
\[
K_{\nu_1}\cdots K_{\nu_r}\Psi - 
K_{\nu_{\sigma(1)}}\cdots K_{\nu_{\sigma(r)}}\Psi
\]
can be written as a linear combination of terms of the form $K_{\nu_1'}\cdots K_{\nu_s'}\Psi$
with $g(\nu_1', \ldots, \nu_s')$ strictly smaller\footnote{This is exactly why we gave more ``weight'' to the $W$ operators by setting $d(W)=3>2=d(L)$ in the definition of $g$. We needed this because, roughly speaking, the commutator between two $W$ operators can give rise to two $L$ operators. The degree $d(\cdot)$ is defined so that it can be reduced using the commutation relations.} than $g(\nu_1, \ldots, \nu_s)$ and coefficients
which are real polynomials of $c,\frac{1}{22+5c}, h$ and $w$.
In particular, it follows that the cyclic space obtained from $\Psi$ is spanned by vectors of the form 
$K_{\nu_1}\cdots K_{\nu_r}\Psi$ where $r\in\{0,1,\cdots\}$, $\l(\nu_j)<0$ for each $j=1,\cdots r$ and $(\nu_1,\cdots ,\nu_r)$ is lexicographically ordered (namely, $\mu = (X_\mu,m) \prec \nu = (X_\nu,n)$ if $X_\mu = W, X_\nu=L$, or $X_\mu = Y_\nu$ and $m < n$).
However, this is not the only important conclusion one can draw.  
\begin{proposition}\label{pr:invariant}
For any $r,s\in\{0,1,\cdots \}$ and $\nu_1,\cdots \nu_s,\mu_1,\cdots \mu_r\in\{L,W\}\times \ZZ$,
there exists a real polynomial $p$ such that whenever $\{L_n,W_n\}$ is a representation of
the $\cW_3$-algebra with central charge $c\neq -\frac{22}{5}$ on a space $V$
with a twisted-invariant \emph{bilinear} form $(\cdot,\cdot)$ and lowest weight vector $\Psi\in V$ with lowest weights $(h,w)$ and 
$(\Psi,\Psi)=1$, then
\[
(K_{\nu_1}\cdots K_{\nu_r}\Psi,\, K_{\mu_1}\cdots K_{\mu_s}\Psi)
 = p(c, \textstyle{\frac{1}{22+5c}},h,w).
\]

\end{proposition}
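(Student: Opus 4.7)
The plan is to reduce everything to computing $(\Psi, K_{\alpha_1}\cdots K_{\alpha_t}\Psi)$ universally. Twisted-invariance lets us move all operators to the right of $\Psi$:
\[
(K_{\nu_1}\cdots K_{\nu_r}\Psi,\, K_{\mu_1}\cdots K_{\mu_s}\Psi) = (\Psi,\, K_{-\nu_r}\cdots K_{-\nu_1} K_{\mu_1}\cdots K_{\mu_s}\Psi),
\]
so it suffices to show that for any finite sequence $\alpha_1,\ldots,\alpha_t \in \{L,W\}\times\ZZ$, the value $(\Psi, K_{\alpha_1}\cdots K_{\alpha_t}\Psi)$ is a polynomial in $c,\frac1{22+5c},h,w$ that depends only on the sequence and not on the particular representation or bilinear form used.

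I would proceed by induction on the total grade $G := g(\alpha_1,\ldots,\alpha_t)$. The base case $G=0$ forces $t=0$, giving $(\Psi,\Psi)=1$. For the inductive step, I would first use the fact that twisted-invariance makes $L_0$ formally ``self-adjoint'' for the bilinear form, so distinct $L_0$-eigenspaces are orthogonal; since $K_{\alpha_1}\cdots K_{\alpha_t}\Psi$ lies in the eigenspace with $L_0$-eigenvalue $h+\sum_i\lambda(\alpha_i)$, the pairing vanishes unless $\sum_i \lambda(\alpha_i)=0$. Under that assumption, if moreover every $\lambda(\alpha_i)=0$ then each $K_{\alpha_i}$ is $L_0$ or $W_0$; by \eqref{eq:comm} these commute and act on $\Psi$ as $h$ and $w$ respectively, giving a scalar $h^a w^b$, manifestly polynomial.

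Otherwise some $\alpha_{i_0}$ has $\lambda(\alpha_{i_0})<0$. Here I invoke the preliminary observation stated just above the proposition: any permutation of $K_{\alpha_1}\cdots K_{\alpha_t}\Psi$ differs from the original by a linear combination of terms $K_{\beta_1}\cdots K_{\beta_q}\Psi$ of strictly smaller grade, with coefficients that are real polynomials in $c,\frac1{22+5c},h,w$. Move $\alpha_{i_0}$ into the leftmost slot: then
\[
(\Psi,\, K_{\alpha_{i_0}}\cdots \Psi) = (K_{-\alpha_{i_0}}\Psi,\, \cdots) = 0,
\]
since $\lambda(-\alpha_{i_0})>0$ forces $K_{-\alpha_{i_0}}\Psi=0$. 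The remaining correction terms have strictly smaller grade, so by the induction hypothesis they yield polynomials in $c,\frac1{22+5c},h,w$, and combining with the polynomial coefficients from the reordering closes the induction.

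The main technical obstacle is the preliminary observation itself, specifically the $[W_m,W_n]$ relation whose quadratic term $\Lambda_{m+n}$ is an infinite sum of $L$-products. On any vector $K_{\beta_1}\cdots K_{\beta_q}\Psi$, however, only finitely many modes of $\Lambda_{m+n}$ act nontrivially, and each resulting reorder reduces the total grade; keeping the degree weighting $d(L)=2,\ d(W)=3$ is exactly what guarantees that $[W,W]$-commutators (which can produce pairs of $L$'s) drop the grade from $6$ to $4$. The rational factor $b^2 = \tfrac{16}{22+5c}$ appearing in this commutator is precisely the source of the $\frac1{22+5c}$ dependence, and all other coefficients arising from the commutation rules are polynomial in $c$; the parameters $h$ and $w$ enter only when a zero mode finally hits $\Psi$. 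This grade-based induction thus produces exactly a polynomial in $c,\frac1{22+5c},h,w$, independent of the chosen representation, establishing the universal value claimed by the proposition.
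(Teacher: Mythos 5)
Your proof is correct and follows essentially the same route as the paper's: reduce to a one-sided pairing via twisted-invariance, induct on the grade $g$ (with $d(L)=2$, $d(W)=3$), and use the reordering observation together with the annihilation of $\Psi$ by positive-level modes. Your use of $L_0$-eigenspace orthogonality to dispose of the $\sum_i\lambda(\alpha_i)\neq 0$ case, and your choice to extract a negative-level operator rather than casing on the level of the leftmost one, are only cosmetic variations on the paper's argument.
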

\begin{proof}
We shall inductively construct such polynomials without any particular knowledge about the actual representation. It is enough to deal with the case $s=0$, since  by the invariance of the form,
we can put everything on one side:
\[
(K_{\nu_1}\cdots K_{\nu_r}\Psi,\, K_{\mu_1}\cdots K_{\mu_s}\Psi) = 
(K_{-\mu_s}\cdots K_{-\mu_1}K_{\nu_1}\cdots K_{\nu_r}\Psi,\,\Psi). 
\]
If further $r=0$, then the claim is trivially true, while for $r=1$, we have the expression $(K_{\nu_1}\Psi, \Psi) =(\Psi,K_{-\nu_1}\Psi)$, showing that it is zero unless
$\l(\nu_1)= 0$, in which vase it is $h$ when $\nu_1=(L,0)$ and $w$ when $\nu_1=(W,0)$.
Thus the claim is true for $g(\nu_1,\cdots,\nu_r)\leq 3$. 
Now assume the claim is true for $g(\nu_1,\cdots,\nu_r)< n $ and consider the case
$g(\nu_1,\cdots,\nu_r)=n>3$. If $\l(\nu_1)<0$, then the by moving $K_{\nu_1}$ to the other side, we see that $(K_{\nu_1}\cdots K_{\nu_r}\Psi,\Psi)=0$. If $\l(\nu_1)=0$, then 
by the same argument, the value of the form is $h (K_{\nu_2}\cdots K_{\nu_r}\Psi,\Psi)$
or $w(K_{\nu_2}\cdots K_{\nu_r}\Psi,\Psi)$, depending on whether $\nu_1=(L,0)$ or $(W,0)$.
In both cases we are done, as by the inductive hypothesis, we already have a polynomial giving the value of $(K_{\nu_2}\cdots K_{\nu_r}\Psi,\Psi)$. If finally $\l(\nu_1)>0$, then 
$K_{\nu_1}$ annihilates $\Psi$ and
\[
K_{\nu_1}K_{\nu_2}\cdots K_{\nu_r}\Psi = 
(K_{\nu_1}K_{\nu_2}\cdots K_{\nu_r} - 
K_{\nu_2}\cdots K_{\nu_r}K_{\nu_1})\Psi.
\]
which, as was mentioned, can be rewritten as a
linear combination of terms of the form $K_{\nu_1'}\cdots K_{\nu_s'}\Psi$
with $g(\nu_1', \ldots, \nu_s')$ strictly smaller than $g(\nu_1,\mu_1, \ldots, \nu_r)$ and coefficients which are real polynomials of $c,\frac{1}{22+5c}, h$ and $w$.
This concludes the induction.
\end{proof}
\begin{corollary}
The $\cW_3$-algebra admits a lowest weight representation with a 
symmetric, non-degenerate twisted-invariant bilinear form
form for every value of the central charge $c\neq -\frac{22}{5}$ and
lowest weight $(h,w)\in\CC^2$. If further $c,h,w\in \RR$, then the same remains true 
even if we replace the words ``symmetric bilinear'' by ``Hermitian''.
\end{corollary}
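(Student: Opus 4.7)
The plan is to use the analytic-continuation strategy announced at the start of the appendix: construct the required lowest weight representations and their invariant forms concretely for a ``large'' set of parameter values, read off all relevant matrix entries and form values as polynomial expressions in $(c,\tfrac{1}{22+5c},h,w)$, and then extend to all admissible $(c,h,w)$ by polynomial continuation. Proposition~\ref{pr:invariant} already delivers the polynomiality of the bilinear-form values, and essentially the same grading induction shows that reducing $K_\mu K_{\nu_1}\cdots K_{\nu_r}\Psi$ to the lex basis using the commutators~\eqref{eq:comm} yields polynomial coefficients in the same variables --- crucially, the infinite sum defining $\L_n$ in $[W_m,W_n]$ collapses at each application because $L_k$ annihilates all vectors of level strictly less than $k$ in any lowest weight representation.

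For the concrete input I would use Theorem~\ref{th:2c}: for every $(c,h,w)$ in the Euclidean-open set
\[
U=\left\{(c,h,w)\in\RR^3:c>2,\ h>\tfrac{c-2}{24},\ |w|<\sqrt{\tfrac{8}{198+45c}}\bigl(2h-\tfrac{c-2}{12}\bigr)^{\!3/2}\right\},
\]
the construction of Section~\ref{more} produces a concrete unitary lowest weight representation of $\cW_3$. Since $U\subset\RR^3\subset\CC^3$ is Euclidean open, it is Zariski dense in $\CC^3$; intersecting with the (also Zariski dense) complement $S_0$ of the Kac-determinant zero loci, we get a Zariski dense set $U\cap S_0$ on which the representation is irreducible, the lex-ordered monomials form a basis, and the matrix entries of each $L_n, W_n$ in this basis are the real-coefficient polynomials in $(c,\tfrac{1}{22+5c},h,w)$ produced by the reduction procedure. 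The Gram matrix of the invariant Hermitian form on this basis is real, so the same matrix, reinterpreted as a bilinear pairing, defines a symmetric twisted-invariant bilinear form: when the matrix $A_\mu$ of $K_\mu$ is real, the two invariance conditions $A_\mu^* M = M A_{-\mu}$ and $A_\mu^T M = M A_{-\mu}$ coincide.

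I would then define the abstract Verma module $\underline{V}^{\cW_3}_{c,h,w}$ for every $c\neq -\tfrac{22}{5}$ and $(h,w)\in\CC^2$ as the free vector space on lex-ordered formal monomials $K_{\nu_1}\cdots K_{\nu_r}\Psi$, and define the action of each $K_\mu$ on a basis vector by the polynomial rewriting rule: push $K_\mu$ to its proper position using~\eqref{eq:comm} and evaluate on $\Psi$ via $L_k\Psi=W_k\Psi=0$ for $k>0$, $L_0\Psi=h\Psi$, $W_0\Psi=w\Psi$. The $\cW_3$-algebra relations~\eqref{eq:comm}, applied to a fixed lex basis vector and expanded in the lex basis, become polynomial identities in $(c,\tfrac{1}{22+5c},h,w)$ on each finite-dimensional level subspace; they hold on $U\cap S_0$, where $\underline{V}^{\cW_3}_{c,h,w}$ maps isomorphically onto the concrete representation in which the relations hold tautologically, and hence by Zariski density they hold for all admissible parameters. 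The bilinear form on $\underline{V}^{\cW_3}_{c,h,w}$ is defined by the polynomials from Proposition~\ref{pr:invariant}; its symmetry and twisted invariance are again polynomial identities valid on $U\cap S_0$, hence valid everywhere.

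To secure non-degeneracy, quotient $\underline{V}^{\cW_3}_{c,h,w}$ by $\ker(\cdot,\cdot)$: by twisted invariance this is a $\cW_3$-submodule, and it is proper since $(\Psi,\Psi)=1$, so the quotient is a nonzero lowest weight representation carrying a non-degenerate twisted-invariant symmetric bilinear form. For $(c,h,w)\in\RR^3$, every matrix entry and form value becomes a real polynomial evaluated at a real point, hence real; the antilinear involution $\omega$ on the lex basis defined by complex-conjugation of coefficients therefore intertwines the action, so $\<x,y\>:=(x,\omega y)$ is a Hermitian twisted-invariant form whose kernel quotient is the desired Hermitian representation. The main obstacle is verifying that the polynomial rewriting really defines a well-defined, consistent $\cW_3$-module structure on the formal space despite the infinite sum in $\L_n$: the grading induction of Proposition~\ref{pr:invariant} makes each individual reduction step finite, while the Zariski-density argument above forces consistency between different orders of rewriting. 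Once those two ingredients are in place, both the existence of $\underline{V}^{\cW_3}_{c,h,w}$ and of its invariant form --- bilinear in general, Hermitian in the real case --- follow from the polynomial identities.
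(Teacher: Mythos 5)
Your proposal is correct in substance, but it takes a different route from the paper's own proof of this corollary, and the difference is worth noting. You construct the full Verma module first: a free space on \emph{lex-ordered} monomials, with the action of each $K_\mu$ defined by a rewriting rule whose matrix entries are rational in $(c,\frac{1}{22+5c},h,w)$ and are pinned down by continuation from a parameter region where concrete unitary representations exist. That is essentially the content of the paper's Proposition \ref{pr:verma}, and the corollary then follows by quotienting by the radical of the form. The paper's proof of the corollary itself is leaner and deliberately avoids the Verma module: it takes the free vector space $\tilde V$ on \emph{arbitrary} (unordered, unreduced) words $K_{\nu_1}\cdots K_{\nu_r}\Psi$, on which each $K_\nu$ acts tautologically by prepending a letter --- so no rewriting rule and no consistency question ever arises --- defines the bilinear form directly by the universal polynomials of Proposition \ref{pr:invariant}, and passes to the quotient $V=\tilde V/\tilde N$; the only things needing rational continuation from the region $H$ are the symmetry of the form, the invariance $K_\nu\tilde N\subset\tilde N$, and the $\cW_3$ relations on the quotient. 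This buys a real simplification precisely at the weakest link of your argument: your claim that ``Zariski density forces consistency between different orders of rewriting'' is correct but requires you to first fix one execution of the rewriting, check it yields rational matrix entries, and observe that on the dense set every execution computes the action in an honest module --- i.e., you must carry out the work of Proposition \ref{pr:verma} in full before the corollary is available, whereas the paper gets the corollary without it (and indeed states that the Verma module is not needed for the main results). Your Hermitian-case argument via the real Gram matrix and the conjugation involution $\omega$ is equivalent to the paper's observation that the real span $M$ of the monomials satisfies $M\cap iM=\{0\}$, so that $\<a+ib,c+id\>:=(a-ib,c+id)$ is well defined. One small inaccuracy: your set $U$ includes $c\ge 98$, where the paper establishes positivity of all Kac determinants only via \eqref{eq:f11} for $2<c<98$; you should restrict to that range (which is still Euclidean-open, hence Zariski dense) before invoking linear independence of the lex basis.
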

\begin{proof}
Consider a lowest weight representation with either a non-degenerate, symmetric twisted-invariant bilinear form $(\cdot,\cdot)$ or a non-degenerate Hermitian invariant sesquilinear form $\langle\cdot, \cdot\rangle$. If $c,h,w\in \RR$, then the arguments used in our previous proof remain valid regardless whether we apply them for $(\cdot,\cdot)$ or $\langle\cdot,\cdot\rangle$ and show that the product of elements from the \emph{real} subspace $M$ spanned by vectors of the form $K_{\nu_1}\cdots K_{\nu_r}\Psi$ is real and hence -- because of the non-degeneracy of the form -- that $M\cap iM=\{0\}.$
It then follows that starting from either $(\cdot,\cdot)$ or from $\langle\cdot, \cdot\rangle$, the equation
\[
 \langle a+ib, c+id \rangle = (a-ib, c+id)\;\;\;\; (a,b,c,d\in M)
\]
defines unambiguously the other object with all the desired properties. 

By the construction in Section \ref{more}, there exists a region $H\subset \RR^3$ with nonempty interior such that for all $(c,h,w)\in H$, there is a lowest weight representation of the $\cW_3$-algebra with central charge
$c$ and lowest weight $(h,w)$ having an invariant \emph{inner} product
(see Theorem \ref{th:2c} for an actual description of the region $H$).
In particular, for these values of $c,h$ and $w$ we also have the existence
of a non-degenerate, symmetric twisted-invariant bilinear form. Now suppose the value of $c\neq -\frac{22}{5}$, $h$
and $w$ are arbitrary. Let $\tilde V$ be the linear space freely spanned by (at the moment formal) expressions 
of the form $K_{\nu_1}\cdots K_{\nu_r}\Psi$ where $r\in\{0,1,\cdots\}$. We introduce a bilinear form 
on $\tilde V$ by setting
\[
(K_{\nu_1}\cdots K_{\nu_r}\Psi,\, K_{\mu_1}\cdots K_{\mu_s}\Psi)
= p(c,\textstyle{\frac{1}{22+5c}},h,w)
\]
where for each choice of $\nu_1, \cdots, \nu_r$ and $\mu_1, \cdots, \mu_s$, $p$ is a (possibly different) polynomial as in Proposition \ref{pr:invariant}. Note in particular, that the above value  given to the form is a rational function of $c,h,w$, and thus it is completely determined by its values in $H$.

To check that the introduced form is symmetric, we need to verify that 
\[
(K_{\nu_1}\cdots K_{\nu_r}\Psi,\, K_{\mu_1}\cdots K_{\mu_s}\Psi)
=
(K_{\mu_1}\cdots K_{\mu_s}\Psi,\, K_{\nu_1}\cdots K_{\nu_r}\Psi)
\]
for each choice of $\nu_1, \cdots, \nu_r$ and $\mu_1, \cdots, \mu_s$. However -- though
not indicated in notations -- each side of the above expression is a rational function of 
$c,h,w$, and when $(c,h,w)\in H$, we indeed have an equality. But if an equality of rational functions holds in $H$, then so does for all of their domain.

Let $V$ be the space obtained by factorizing $\tilde V$ with the set of ``null-vectors'',
i.e.\! by the subspace $\tilde N := \{x\in \tilde V:\text{ for all } y\in \tilde V: (x,y)=0\}$.
On this space, our form is still well-defined, symmetric, bilinear and by its construction, non-degenerate. We have to show that the natural action of the $K$ operators on $V$ is well-defined and gives a lowest weight representation of the $\cW_3$-algebra on the factorized space $V$.

To show well-definedness, we need to check that if $x \in \tilde N$, then $K_{\nu}x \in \tilde N$; that is, $(K_{\nu}x, y) = 0$ for all (non-commutative) polynomial $y$ in $\{L_n, W_n\}$.
We know that the left-hand side is a rational function of $(c,h,w)$ and that its value is indeed zero in $H$
-- and hence that it is zero on \emph{all} of its domain. This proves well-definedness.
Lastly, to verify that $V$ gives a lowest weight representations,
we only have to repeat the argument: both of the $\cW_3$ relations and the lowest weight property
are written as equalities between rational functions in $c,h,w$ with only singularity at $c = -\frac{22}5$,
therefore, their validity in $H$ implies their validity for all $(c,h,w), c \neq -\frac{22}5$.
\end{proof}

Although we do not need Verma modules for our main results, we think it worth explaining
how their existence can be verified
using reasoning similar to what we have just employed. In addition, although we will need Kac determinants and in particular
the results of Mizoguchi in \cite{Mizoguchi89}, we note that, for the notion of Kac determinant to be well-defined,
there is no need to have a Verma module. Indeed, as was explained,
the value of $(K_{\nu_1}\cdots K_{\nu_r}\Psi,\, K_{\mu_1}\cdots K_{\mu_s}\Psi)$ is \emph{universal}:
it depends only on the central charge $c$ and lowest weights $h,w$, but not the particular representation.
Indeed, to obtain his result, Mizoguchi never considers Verma modules; he works with some concrete representation
to find null-vectors. Therefore, our use in Corollary \ref{cr:2c98} and Proposition \ref{pr:verma}
of the Kac determinant computed in \cite{Mizoguchi89} does not involve circular arguments and is justified.

\begin{proposition}\label{pr:verma}
For every value of the central charge $c\neq -\frac{22}5$ and lowest weights $(h,w)\in\CC^2$, there exists (an up to isomorphism) unique lowest weight representation of the $\cW_3$-algebra with lowest weight vector $\Psi$
in which vectors of the form 
\begin{align}\label{eq:vectorsrev}
 L_{m_1}\cdots L_{m_r}W_{n_1}\cdots W_{n_s}\Psi
\end{align}
where $n_1\leq \cdots \leq n_s<0$ and $m_1\leq \cdots \leq m_r<0$, form a basis; 
i.e.\! a Verma representation.

This representation admits a unique twisted-invariant bilinear form $(\cdot,\cdot)$ with normalization $(\Psi,\Psi)=1$, and this form is automatically symmetric. Moreover, if in addition $c,h,w\in \RR$, then everything remains true even if we replace
the words ``bilinear'' by ``sesquilinear'' and ``symmetric'' by ``Hermitian''. 
\end{proposition}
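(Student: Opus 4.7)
The proof will closely follow the analytic flavour of the preceding corollary: concrete constructions will cover an open subset of parameter space, and all statements will then be extended to all $(c,h,w)$, $c\neq-\frac{22}{5}$, by polynomial-identity arguments based on Proposition~\ref{pr:invariant}. Uniqueness of both the Verma module and the invariant form $(\cdot,\cdot)$ is immediate from that proposition, since matrix entries between ordered monomials are pinned down by universal polynomials in $c,(22+5c)^{-1},h,w$. Symmetry of the form follows because $(u,v)-(v,u)$ is a universal polynomial that vanishes on the nonempty region $H\subset\RR^3$ of Theorem~\ref{th:2c}, where a concrete symmetric twisted-invariant bilinear form exists on a genuine lowest weight representation (by Corollary~\ref{cor:R-hermitian} with $q_1,q_2\in\RR$), and therefore vanishes identically.

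For existence I would build the Verma module directly on the free vector space $\tilde V$ spanned by the ordered monomials \eqref{eq:vectorsrev}, defining candidate operators $L_n,W_n$ by recursion in the total degree $g$. For $\l(\nu)<0$, one prepends $K_\nu$ to a monomial and reorders to standard form via the $\cW_3$-commutation relations; for $\l(\nu)\geq 0$ one uses $K_\nu \cdot K_{\nu_1}\cdots K_{\nu_r}\Psi = [K_\nu,K_{\nu_1}]K_{\nu_2}\cdots K_{\nu_r}\Psi + K_{\nu_1}(K_\nu \cdot K_{\nu_2}\cdots K_{\nu_r}\Psi)$, terminating at $L_0\Psi=h\Psi$, $W_0\Psi=w\Psi$ and $K_\nu\Psi=0$ for $\l(\nu)>0$. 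The infinite sum inside $\L_k$ poses no problem because on a fixed ordered monomial only finitely many terms contribute. A straightforward induction on $g$ shows that the matrix entries of these operators in the ordered basis are polynomials in $c,(22+5c)^{-1},h,w$.

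The main task, and the main technical obstacle, is to verify that this recursion is unambiguous and that the resulting operators satisfy the $\cW_3$-commutation relations \eqref{eq:comm}. Both statements reduce, level by level, to identities between polynomials in $c,(22+5c)^{-1},h,w$, which I would verify on the Zariski-dense set of $(c_0,h_0,w_0)$ at which every Kac determinant (computed in \cite{Mizoguchi89}) is nonzero and the concrete representation from Theorem~\ref{th:2c} -- suitably extended via Corollary~\ref{cor:R-hermitian} with $q_1,q_2\in\RR$ -- is available: there the ordered monomials are linearly independent by non-vanishing of the Gram matrices, and the $\cW_3$-relations hold by construction, so the polynomial identities hold on that set and consequently hold identically. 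Once the Verma action on $\tilde V$ is in place, the twisted-invariant bilinear form is defined on the ordered basis by the universal polynomials of Proposition~\ref{pr:invariant}, and invariance is again verified by the same polynomial-identity-on-a-dense-set argument. The Hermitian version for real $c,h,w$ is then deduced exactly as in the preceding corollary: the form is real-valued on the real span $M$ of the ordered basis and non-degenerate, forcing $M\cap iM=\{0\}$, after which $\langle a+ib,c+id\rangle:=(a-ib,c+id)$ defines the required sesquilinear Hermitian form.
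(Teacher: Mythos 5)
Your proposal is correct and follows essentially the same route as the paper: realize the module on the free span of ordered monomials with matrix coefficients that are polynomials/rational functions of $c$, $\frac{1}{22+5c}$, $h$, $w$, check requirements (i)--(iii) and invariance on the region $H$ of Theorem \ref{th:2c} where the Kac determinants are positive and genuine Verma modules exist, and extend everything by the identity principle for rational functions; the only cosmetic difference is that you generate the coefficients by an explicit reordering recursion while the paper takes them directly from the known representations on $H$ and continues them analytically.
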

\begin{proof}
By now we know that for every $c\neq -\frac{22}5$ and $(h,w)\in\CC^2$ there is an irreducible
lowest weight representation. However, in this representation, when $(c,h,w) \in H$,
where $H$ is the set introduced in the proof of Corollary \ref{cr:2c98},
the vectors \eqref{eq:vectorsrev} are independent (since in $H$ all Kac determinants are strictly positive)
and thus this representation is the Verma one.

For the rest of values,
we consider the abstract space $V$ spanned freely by vectors of the form \eqref{eq:vectorsrev}. By doing so, seemingly we have linear independence for free. However, we have to check that it carries a corresponding representation!
At this point, we use quotation marks and write 
symbols such as "$K_{\nu_1}\cdots K_{\nu_r}\Psi$", as this is indeed a vector of $V$ by construction,
but it is not (yet) the vector $\Psi$ acted on by $K$.
Given a $c\neq -\frac{22}5$ and $(h,w)\in\CC^2$, our task is then to define, for each $\nu$, an operator $K_\nu$ acting on $V$ so that they satisfy 
the following requirements:
\begin{itemize}
    \item[(i)] $K_\nu \Psi=0$ whenever $\l(\nu)>0$, $L_0\Psi=h\Psi$, $W_0\Psi=w\Psi$
    \item[(ii)] if $\nu,\nu_1\cdots, \nu_r$ are lexicographically ordered and $\ell(\nu),\ell(\nu_1),\ldots \ell(\nu_r)<0$, then the action of $K_\nu$ on the (abstract) vector "$K_{\nu_1}\cdots K_{\nu_r}\Psi$" should result in
the (abstract) vector "$K_{\nu}K_{\nu_1}\cdots K_{\nu_r}\Psi$". 
    \item[(iii)] $\{K_\nu\}_{\nu\in\{L,W\}\times \ZZ}$ is a representation of the $\cW_3$-algebra with central charge $c$.
\end{itemize}
Let us enumerate our basis vectors of the form \eqref{eq:vectorsrev} and denote them
by $\Psi_0 = \Psi,\Psi_1,\Psi_2,\ldots$. An action of $K_\nu$ can be defined
by fixing its matrix-components; i.e.\! by choosing 
scalars $M_{\nu, j,k}(c,h,w)\in \CC$ and setting $K_\nu \Psi_j := \sum_k M_{\nu, j,k}(c,h,w)\Psi_k$.
When $(c,h,w) \in H$, we know that this can be done in a way so that requirements (i), (ii) and (iii) are met, because for those values we do have Verma representations. However, it is not difficult to see that again, the coefficients $M_{\nu, j,k}(c,h,w)$ given by those Verma representations which are already known to exist, are rational expressions of the central
charge $c$ and lowest weights $(h,w)$ with real coefficients and possible singularity only at $c=-\frac{22}5$.
Thus, we can naturally continue them also outside of $H$.

We use these analytically continued matrix coefficients define the operators $K_\nu$.
Again, since inside $H$ these coefficients satisfy the properties (i), (ii) and (iii)
that are expressed in terms of rational functions of $c,h,w$ with only possible singularity at $c=-\frac{22}5$.
the same remains true outside.
This proves that we obtain a lowest weight representation on $V$.

\end{proof}

\newcommand{\etalchar}[1]{$^{#1}$}
\def\cprime{$'$} \def\polhk#1{\setbox0=\hbox{#1}{\ooalign{\hidewidth
  \lower1.5ex\hbox{`}\hidewidth\crcr\unhbox0}}} \def\cprime{$'$}

\end{document}